\newtheorem{theorem}{Theorem}
\newtheorem{corollary}[theorem]{Corollary}
\newtheorem{definition}[theorem]{Definition}
\newtheorem{remark}[theorem]{Remark}
\newenvironment{proof}[1][Proof]{\noindent\textbf{#1.} }{\ \rule{0.5em}{0.5em}}
\begin{document}

\begin{frontmatter}

\title{A note on some identities involving special functions from
the hypergeometric solution of algebraic equations
}

\author{J. L. Gonz\'{a}lez-Santander\fnref{myfootnote}}
\address{Department of Mathematics. Universidad de Oviedo. C/\ Federico Garc\'{\i}a Lorca, 18. 33007 Oviedo, Asturias. Spain.}
\fntext[myfootnote]{gonzalezmarjuan@uniovi.es}




\begin{abstract}
From the algebraic solution of $x^{n}-x+t=0$ for $n=2,3,4$ and the
corresponding solution in terms of hypergeometric functions, we obtain a set
of reduction formulas for hypergeometric functions. By differentiation and
integration of these results, and applying other known reduction formulas of
hypergeometric functions, we derive new reduction formulas of special
functions as well as the calculation of some infinite integrals in terms of
elementary functions.
\end{abstract}

\begin{keyword}
Reduction formulas of special functions \sep hypergeometric functions \sep integrals of special functions
\MSC[2010] 33B15 \sep 33C05 \sep 33C20
\end{keyword}

\end{frontmatter}


\section{Introduction and preliminaries}

In the literature, we found a large body of literature dealing with the
trinomial equation. In fact, there are different versions of this kind of
equation. For instance, in 1915, Mellin studied the trinomial equation \cite%
{Belardinelli}:
\begin{equation}
y^{n}+x\,y^{p}+1=0,\qquad n>p,  \label{Trinomial_Mellin}
\end{equation}%
where $n,p$ are positive integers and $x\in
\mathbb{R}
$. By using his integral transform, Mellin derived the following series
representation \cite[Chap.3 Sect.8]{Hochstadt}:%
\begin{equation*}
y\left( x\right) =\frac{1}{n}\sum_{r=0}^{\infty }\frac{\,\Gamma \left( \frac{%
1+pr}{n}\right) }{\,\Gamma \left( \frac{1+pr}{n}+1-r\right) }\frac{\left(
-x\right) ^{r}}{r!},
\end{equation*}%
where $\left\vert x\right\vert <\left( p/n\right) ^{-p/n}\left( 1-p/n\right)
^{p/n-1}\leq 2$. In \cite{Miller}, Miller rewrote the solution of (\ref%
{Trinomial_Mellin})\ for positive real numbers $n,p$ in terms of the Wright
function.

In this paper, we are interested in the trinomial equation:
\begin{equation}
x^{n}-x+t=0,  \label{Main_Eqn}
\end{equation}%
assuming that $n\geq 2$ is an integer and $t\in
\mathbb{C}
$. Performing the change of variables $z=t^{-1/n}x$ and $a=-t^{-1/n-1}$, (%
\ref{Main_Eqn})\ is transformed into:
\begin{equation}
z^{n}+a\,z+1=0.  \label{Bring_form}
\end{equation}

Bring and Jerrard proved that any fifth degree equation can be brougth to
into the form given in (\ref{Bring_form})\ for $n=5$ by means of Tschirnhaus
transformations \cite{Adamchik}. Nonetheless, Abel proved in 1824 the
impossibility of solving the general quintic equation by means of radicals
\cite{Pessic}. Consequently, mathematicians turned to the idea of searching
for analytic solutions. The first success in this direction was achieved in
1858 by Hermite and Kronecker who were able to express the solution to the
quintic equation by means of a modular elliptic function (see \cite{Klein}).

Equation (\ref{Main_Eqn})\ was first solved by Lambert in 1758 as a series
development for $x$ in powers of $t$ \cite{Lambert}. Euler's version of
Lambert series \cite{Euler} is connected to the \textit{tree function} and
the \textit{Lambert} $W$ \textit{function} \cite{Corless}. More recently,
Glasser calculated the roots of (\ref{Main_Eqn}) as a finite sum of
generalized hypergeometric functions \cite{Glasser}. In many cases, one of
the roots can be expressed as a single hypergeometric function. However, in
1770, Lagrange \cite{Lagrange} applied his inversion formula \cite[Appendix E%
]{Andrews} to derive a root $x_{n}\left( t\right) $ of the equation (\ref%
{Main_Eqn})\ as an expansion in powers of $t$. Next, we present the
derivation given by Lagrange.

\begin{theorem}[Lagrange inversion formula]
Consider the variables $x$, $t$, and $r$ related by
\begin{equation}
x=t+r\,\phi \left( x\right) ,  \label{x=t+r*phi}
\end{equation}%
where $\phi \left( x\right) $ is analytic in the neighborhood of $x=t$ with $%
\phi \left( t\right) \neq 0$. Consider as well an analytic function $f\left(
x\right) $ in the neighborhood of $x=t$. Then Lagrange's formula is%
\begin{equation}
f\left( x\right) =f\left( t\right) +\sum_{k=1}^{\infty }\frac{r^{k}}{k!}%
\frac{d^{k-1}}{dt^{k-1}}\left[ f^{\prime }\left( t\right) \phi ^{k}\left(
t\right) \right] .  \label{f(x)_Lagrange_inv}
\end{equation}
\end{theorem}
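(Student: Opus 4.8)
The plan is to treat $x$ as a function $x(r,t)$ determined implicitly by (\ref{x=t+r*phi}), to expand $f(x)$ in a Taylor series in the variable $r$ about $r=0$, and to identify the coefficients of that expansion with the terms on the right-hand side of (\ref{f(x)_Lagrange_inv}). Since $\phi$ and $f$ are analytic near $x=t$ and the derivative of $x-t-r\phi(x)$ with respect to $x$ equals $1-r\phi'(x)$, which is nonzero at $r=0$, the implicit function theorem provides an analytic solution $x(r,t)$ with $x(0,t)=t$; consequently $f(x)=\sum_{k\geq 0}\frac{r^{k}}{k!}\left.\partial_{r}^{k}f(x)\right|_{r=0}$, the term $k=0$ being $f(t)$. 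Everything thus reduces to proving that $\left.\partial_{r}^{k}f(x)\right|_{r=0}=\frac{d^{k-1}}{dt^{k-1}}\!\left[f'(t)\,\phi^{k}(t)\right]$ for every $k\geq 1$.

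First I would differentiate (\ref{x=t+r*phi}) separately in $t$ and in $r$, obtaining $\partial_{t}x=(1-r\phi'(x))^{-1}$ and $\partial_{r}x=\phi(x)\,(1-r\phi'(x))^{-1}$, hence the basic identity $\partial_{r}x=\phi(x)\,\partial_{t}x$ and, via the chain rule, $\partial_{r}H(x)=\phi(x)\,\partial_{t}H(x)$ for every analytic $H$. The crucial step is then the following lemma: for every $G$ analytic near $x=t$,
\[
\partial_{r}\!\left[(\partial_{t}x)\,G(x)\right]=\partial_{t}\!\left[\phi(x)\,(\partial_{t}x)\,G(x)\right].
\]
Using $\partial_{r}x=\phi(x)\,\partial_{t}x$ to rewrite the right-hand side as $\partial_{t}\!\left[(\partial_{r}x)\,G(x)\right]$ and expanding both sides by the product rule, the terms carrying $G'(x)$ agree, and the identity reduces to $(\partial_{r}\partial_{t}x)\,G(x)=(\partial_{t}\partial_{r}x)\,G(x)$, which holds by equality of mixed partials.

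With the lemma in hand I would establish, by induction on $k\geq 1$, the closed formula
\[
\partial_{r}^{k}f(x)=\frac{\partial^{k-1}}{\partial t^{k-1}}\!\left[f'(x)\,\phi^{k}(x)\,\partial_{t}x\right].
\]
The base case $k=1$ is $\partial_{r}f(x)=f'(x)\,\partial_{r}x=f'(x)\,\phi(x)\,\partial_{t}x$. For the inductive step one differentiates once more in $r$, commutes $\partial_{r}$ past the $k-1$ derivatives $\partial_{t}^{k-1}$ (legitimate in the analytic category), and applies the lemma with $G(x)=f'(x)\,\phi^{k}(x)$ to rewrite $\partial_{r}\!\left[(\partial_{t}x)\,G(x)\right]$ as $\partial_{t}\!\left[f'(x)\,\phi^{k+1}(x)\,\partial_{t}x\right]$, which increases the order in $t$ by one and closes the induction. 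Finally I would set $r=0$: there $x=t$ and $\partial_{t}x=1$, and since differentiation in $t$ commutes with the substitution $r=0$, the bracket reduces to $f'(t)\,\phi^{k}(t)$, so $\left.\partial_{r}^{k}f(x)\right|_{r=0}=\frac{d^{k-1}}{dt^{k-1}}\!\left[f'(t)\,\phi^{k}(t)\right]$. Substituting into the Taylor expansion yields (\ref{f(x)_Lagrange_inv}).

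The main obstacle is not any single calculation but organizing the induction so that the identity stays in a self-reproducing form — in particular, isolating the right lemma so that each application of $\partial_{r}$ is traded for one further application of $\partial_{t}$ together with an extra factor $\phi(x)$. The analytic-regularity points (existence and analyticity of $x(r,t)$, convergence of the series in $r$, and the interchange of $\partial_{r}$ with $\partial_{t}^{k-1}$) are routine once the implicit function theorem is invoked.
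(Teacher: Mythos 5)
The paper does not actually prove this theorem: it is quoted as a known result, attributed to Lagrange and cited from Andrews--Askey--Roy (Appendix E), and is then immediately applied with $f(x)=x$, $\phi(x)=x^{n}$, $r=1$. So there is no in-paper proof to compare against; what you have supplied is the classical analytic proof (essentially the Laplace/Goursat argument), and it is correct. The computation $\partial_{t}x=(1-r\phi'(x))^{-1}$, $\partial_{r}x=\phi(x)\,\partial_{t}x$ is right, your key lemma $\partial_{r}\bigl[(\partial_{t}x)G(x)\bigr]=\partial_{t}\bigl[(\partial_{r}x)G(x)\bigr]$ is exactly the standard commutation identity (both sides expand to the same thing plus equal mixed partials), and the induction $\partial_{r}^{k}f(x)=\partial_{t}^{k-1}\bigl[f'(x)\phi^{k}(x)\,\partial_{t}x\bigr]$ closes as you describe; evaluation at $r=0$, where $x=t$ and $\partial_{t}x=1$, then yields the stated coefficients. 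Two small remarks. First, the hypothesis $\phi(t)\neq 0$ is never used in your argument (it only matters for the inversion to be nondegenerate), which is fine. Second, your proof establishes the expansion for $|r|$ sufficiently small, which is all the theorem as stated can mean; it does not by itself justify the paper's later use at $r=1$, where convergence of the resulting series in $t$ has to be checked separately (as the paper implicitly does via the hypergeometric convergence condition). That caveat is about the application, not about your proof, which is complete.
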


If we take $f\left( x\right) =x$, $\phi \left( x\right) =x^{n}$, and $r=1$, (%
\ref{x=t+r*phi}) becomes (\ref{Main_Eqn}),\ and (\ref{f(x)_Lagrange_inv})\
reads as%
\begin{equation}
x_{n}\left( t\right) =x=t\left[ 1+\sum_{k=1}^{\infty }\frac{\left( nk\right)
!}{k!\left( nk-k+1\right) !}t^{\left( n-1\right) k}\right] .
\label{Root_Lagrange}
\end{equation}

Fortunately, we can recast (\ref{Root_Lagrange})\ in hypergeometric form.
For this purpose, denote the Pochhammer symbol as $\left( a\right)
_{n}=\Gamma \left( a+n\right) /\Gamma \left( a\right) $, where $\Gamma
\left( z\right) $ is the gamma function. Then, we can prove easily by
induction that:
\begin{equation}
\left( nk\right) !=\left( \frac{1}{n}\right) _{k}\left( \frac{2}{n}\right)
_{k}\cdots \left( \frac{n}{n}\right) _{k}\,n^{nk},  \label{(nk)!}
\end{equation}%
hence%
\begin{equation}
\left( nk-k+1\right) !=\left( \frac{2}{n-1}\right) _{k}\cdots \left( \frac{n%
}{n-1}\right) _{k}\left( n-1\right) ^{\left( n-1\right) k}.
\label{(nk-k+1)!}
\end{equation}

Now, let us define the generalized hypergeometric series\textit{\footnote{%
For the different cases of convergence of the generalized hypergeometric
series see \cite[Sect. 16.2]{DLMF}. }} as follows:

\begin{definition}[Generalized hypergeometric series]
\begin{equation}
_{p}F_{q}\left( \left.
\begin{array}{c}
a_{1},\ldots ,a_{p} \\
b_{1},\ldots ,b_{q}%
\end{array}%
\right\vert z\right) =\sum_{k=0}^{\infty }\frac{\left( a_{1}\right)
_{k}\cdots \left( a_{p}\right) _{k}}{\left( b_{1}\right) _{k}\cdots \left(
b_{q}\right) _{k}}\frac{z^{k}}{k!}.  \label{Hyper_Generalized_def}
\end{equation}%
If none of the parameters $b_{1},\ldots b_{q}$ are nonpositive integers and $%
p\leq q$, the series (\ref{Hyper_Generalized_def})\ converges for all finite
values of $z$ and defines and entire function.
\end{definition}

\begin{remark}
Note that
\begin{equation}
\lim_{z\rightarrow 0}\,_{p}F_{q}\left( \left.
\begin{array}{c}
a_{1},\ldots ,a_{p} \\
b_{1},\ldots ,b_{q}%
\end{array}%
\right\vert z\right) =1.  \label{Hyper_z=1}
\end{equation}
\end{remark}

Therefore, according to (\ref{(nk)!})-(\ref{Hyper_Generalized_def}), we
finally rewrite (\ref{Root_Lagrange})\ as%
\begin{equation}
x_{n}\left( t\right) =t\,_{n}F_{n-1}\left( \left.
\begin{array}{c}
\frac{1}{n},\ldots ,\frac{n}{n} \\
\frac{2}{n-1},\ldots ,\frac{n}{n-1}%
\end{array}%
\right\vert n\left( \frac{nt}{n-1}\right) ^{n-1}\right) .
\label{x_n(t)_general}
\end{equation}

On the one hand, for $n=2,3,4$, (\ref{x_n(t)_general})\ is reduced to (see
as well \cite{Perelomov}): \
\begin{eqnarray}
x_{2}\left( t\right) &=&t\,_{2}F_{1}\left( \left.
\begin{array}{c}
\frac{1}{2},1 \\
2%
\end{array}%
\right\vert 4t\right) ,  \label{x2(t)_Hyp} \\
x_{3}\left( t\right) &=&t\,_{2}F_{1}\left( \left.
\begin{array}{c}
\frac{1}{3},\frac{2}{3} \\
\frac{3}{2}%
\end{array}%
\right\vert 3\left( \frac{3t}{2}\right) ^{2}\right) ,  \label{x3(t)_Hyp} \\
x_{4}\left( t\right) &=&t\,_{3}F_{2}\left( \left.
\begin{array}{c}
\frac{1}{4},\frac{1}{2},\frac{3}{4} \\
\frac{2}{3},\frac{4}{3}%
\end{array}%
\right\vert 4\left( \frac{4t}{3}\right) ^{3}\right) .  \label{x4(t)_Hyp}
\end{eqnarray}%
where we have simplified the hypergeometric series with common Pochhammer
symbols in numerator and denominator in (\ref{x3(t)_Hyp})\ and (\ref%
{x4(t)_Hyp}), according to definition (\ref{Hyper_Generalized_def}). On the
other hand, it is well-known that the roots of (\ref{x2(t)_Hyp})-(\ref%
{x4(t)_Hyp})\ are expressible in terms of elementary functions. The scope of
this paper is just to compare both approaches, and from this comparison,
derive some new reduction formulas and definite integrals involving special
functions. For this purpose, we will use the following differentiation
formulas, that can be easily proved by induction:\
\begin{eqnarray}
\frac{d^{n}}{dt^{n}}\left( \frac{1}{t}\right) &=&\frac{\left( -1\right)
^{n}n!}{t^{n+1}},  \label{Dn(1/t)} \\
\frac{d^{n}}{dt^{n}}\left( \sqrt{1-t}\right) &=&\left( -\frac{1}{2}\right)
_{n}\,\left( 1-t\right) ^{1/2-n}.  \label{Dn(sqrt(1-t))}
\end{eqnarray}

Notice that for $n=1$ in (\ref{Dn(sqrt(1-t))}), we have that
\begin{equation*}
\frac{d}{dt}\left( \sqrt{1-t}\right) =\frac{-1}{2\sqrt{1-t}},
\end{equation*}%
thus, knowing that \cite[Eqn. 18:5:7]{Atlas}
\begin{equation}
\left( x\right) _{n+1}=x\left( x+1\right) _{n},  \label{(x)_n+1}
\end{equation}%
we conclude%
\begin{equation}
\frac{d^{n}}{dt^{n}}\left( \frac{1}{\sqrt{1-t}}\right) =\left( \frac{1}{2}%
\right) _{n}\,\left( 1-t\right) ^{-1/2-n}.  \label{Dn(1/sqrt)}
\end{equation}

Also, we will use Leibniz's differentiation formula \cite[Eqn. 1.4.2]{DLMF}
(for the historical origin of this formula, see \cite[p. 143]{Leibniz}),%
\begin{equation}
\frac{d^{n}}{dt^{n}}\left[ f\left( t\right) g\left( t\right) \right]
=\sum_{k=0}^{n}\binom{n}{k}f^{\left( k\right) }\left( t\right) g^{\left(
n-k\right) }\left( t\right) ,  \label{Leibniz_formula}
\end{equation}%
Gauss summation formula \cite[Eqn. 15.4.20]{DLMF} (for the original work of
Gauss, see \cite{Gauss}),
\begin{eqnarray}
&&_{2}F_{1}\left( \left.
\begin{array}{c}
a,b \\
c%
\end{array}%
\right\vert 1\right) =\frac{\Gamma \left( c\right) \Gamma \left(
c-a-b\right) }{\Gamma \left( c-a\right) \Gamma \left( c-b\right) },
\label{Gauss_sum} \\
&&\mathrm{Re}\,\left( c-a-b\right) >0,  \notag
\end{eqnarray}%
and Whipple's sum \cite[Eqn. 16.4.7]{DLMF},
\begin{eqnarray}
&&_{3}F_{2}\left( \left.
\begin{array}{c}
a,1-a,c \\
d,2c-d+1%
\end{array}%
\right\vert 1\right)  \label{Whipple_sum} \\
&=&\frac{\pi \,2^{1-2c}\,\Gamma \left( d\right) \Gamma \left( 2c-d+1\right)
}{\Gamma \left( c+\frac{a-d+1}{2}\right) \Gamma \left( c+1-\frac{a+d}{2}%
\right) \Gamma \left( \frac{a+d}{2}\right) \Gamma \left( \frac{d-a+1}{2}%
\right) },  \notag \\
&&\mathrm{Re}\,\left( c\right) >0\text{ or }a\in
\mathbb{Z}
.  \notag
\end{eqnarray}

For the calculation of the definite integrals, we will use the following
result \cite[Ch. 2. Ex. 11]{Andrews}:
\begin{eqnarray}
&&\int_{0}^{\infty }e^{-st}t^{\alpha -1}\,_{p}F_{q}\left( \left.
\begin{array}{c}
a_{1},\ldots ,a_{p} \\
b_{1},\ldots ,b_{q}%
\end{array}%
\right\vert xt\right) dt  \label{Int_Andrews} \\
&=&\frac{\Gamma \left( \alpha \right) }{s^{\alpha }}\,_{p+1}F_{q}\left(
\left.
\begin{array}{c}
a_{1},\ldots ,a_{p},\alpha \\
b_{1},\ldots ,b_{q}%
\end{array}%
\right\vert \frac{x}{s}\right) ,  \notag \\
&&p\leq q,\,\mathrm{Re}\,s>0,\,\mathrm{Re}\,\alpha >0.  \notag
\end{eqnarray}

This paper is organized as follows. Section $2$ equates the solution of (\ref%
{Main_Eqn}) for $n=2$ to $x_{2}\left( t\right) $. From this result, and
using some differentiation formulas of the hypergeometric $_{2}F_{1}$
function, we obtain a set of reduction formulas of some hypergeometric
functions in terms of elementary functions, which extends the classical
Schwarz's list \cite{Schwarz}. As corollaries, we obtain identities
involving the incomplete beta function and the Legendre function. Also, we
calculate two infinite integrals involving the lower incomplete gamma
function. Section $3$ equates the solution of (\ref{Main_Eqn}) for $n=3$ to $%
x_{3}\left( t\right) $, and from it, we derive a new reduction formula of an
hypergeometric $_{2}F_{1}$ function in terms of elementary functions. Also,
we calculate an infinite integral involving the parabolic cylinder function.
Section $4$ derives a reduction formula of a $_{3}F_{2}$ function in terms
of elementary functions, equating the solution of (\ref{Main_Eqn}) for $n=4$
to $x_{4}\left( t\right) $. From the latter reduction formula, we obtain an
identity involving the product of two Legendre functions. Finally, Section $5
$ collects our conclusions. In the Appendix, we recall the solution of the
cubic and the quartic equations.

\section{Case $n=\ 2$}

In this case, the algebraic solution of (\ref{Main_Eqn})\ is
\begin{equation}
x_{2}\left( t\right) =\frac{1\pm \sqrt{1-4t}}{2},  \label{x2(t)_alg}
\end{equation}%
hence, selecting the proper root of (\ref{x2(t)_alg}), we can equate it to (%
\ref{x2(t)_Hyp}), to obtain%
\begin{equation}
_{2}F_{1}\left( \left.
\begin{array}{c}
\frac{1}{2},1 \\
2%
\end{array}%
\right\vert t\right) =\frac{2}{t}\left( 1-\sqrt{1-t}\right) ,
\label{2F1_resultado}
\end{equation}%
which agrees with the result reported in the literature \cite[Eqn. 7.3.2(84)]%
{Prudnikov3}. Notice that (\ref{2F1_resultado})\ can be derived from the
binomial theorem \cite[Eqn. 6.14.1]{Atlas}. Indeed,
\begin{equation*}
\sqrt{1-t}=\sum_{k=0}^{\infty }\left( -\frac{1}{2}\right) _{k}\,\frac{t^{k}}{%
k!}=1+\sum_{k=1}^{\infty }\left( -\frac{1}{2}\right) _{k}\,\frac{t^{k}}{k!},
\end{equation*}%
hence, applying (\ref{(x)_n+1}), we have%
\begin{equation*}
\frac{\sqrt{1-t}-1}{t}=\sum_{k=1}^{\infty }\left( -\frac{1}{2}\right) _{k}\,%
\frac{t^{k-1}}{k!}=-\frac{1}{2}\sum_{k=1}^{\infty }\left( \frac{1}{2}\right)
_{k-1}\,\frac{\left( 1\right) _{k-1}\,t^{k-1}}{\left( 2\right) _{k-1}\left(
k-1\right) !},
\end{equation*}%
and the result follows. From (\ref{2F1_resultado}), we obtain next a set of
results using the formulas stated in the Introduction.

\subsection{First differentiation formula}

\begin{theorem}
For $n=0,1,2,\ldots $ and $t\in
\mathbb{C}
$, the following reduction formula holds true:%
\begin{eqnarray}
&&\,_{2}F_{1}\left( \left.
\begin{array}{c}
\frac{1}{2}+n,1+n \\
2+n%
\end{array}%
\right\vert t\right)  \label{2F1_resultado_(1)} \\
&=&\left\{
\begin{array}{ll}
\displaystyle%
\frac{2\left( -1\right) ^{n}\left( n+1\right) !}{\left( \frac{1}{2}\right)
_{n}\,t^{n+1}}\left[ 1-\sqrt{1-t}\sum_{k=0}^{n}\frac{\left( -\frac{1}{2}%
\right) _{k}}{k!}\left( \frac{t}{t-1}\right) ^{k}\right] , & t\neq 0,1, \\
1, & t=0, \\
2, & t=1,n=0, \\
\infty , & t=1,n\geq 1.%
\end{array}%
\right.  \notag
\end{eqnarray}
\end{theorem}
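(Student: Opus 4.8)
The plan is to derive (\ref{2F1_resultado_(1)}) by differentiating the elementary identity (\ref{2F1_resultado}) exactly $n$ times and comparing both sides. First I would record the parameter-shift rule
\begin{equation*}
\frac{d^{n}}{dt^{n}}\,{}_{2}F_{1}\!\left(a,b;c;t\right)=\frac{\left(a\right)_{n}\left(b\right)_{n}}{\left(c\right)_{n}}\,{}_{2}F_{1}\!\left(a+n,b+n;c+n;t\right),
\end{equation*}
which follows immediately from the series definition (\ref{Hyper_Generalized_def}) by term-by-term differentiation (legitimate for $\left\vert t\right\vert<1$) and the relation $\left(x\right)_{k+1}=x\left(x+1\right)_{k}$ from (\ref{(x)_n+1}). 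Applying this with $a=\tfrac12$, $b=1$, $c=2$, and using $\left(1\right)_{n}=n!$ and $\left(2\right)_{n}=\left(n+1\right)!$, the coefficient $\left(a\right)_{n}\left(b\right)_{n}/\left(c\right)_{n}$ equals $\left(\tfrac12\right)_{n}/\left(n+1\right)$, so (\ref{2F1_resultado}) gives
\begin{equation*}
{}_{2}F_{1}\!\left(\tfrac12+n,1+n;2+n;t\right)=\frac{n+1}{\left(\tfrac12\right)_{n}}\,\frac{d^{n}}{dt^{n}}\!\left[\frac{2}{t}\left(1-\sqrt{1-t}\right)\right],
\end{equation*}
and everything reduces to this $n$-th derivative.

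To evaluate it I would write $\frac{2}{t}\left(1-\sqrt{1-t}\right)=\frac{2}{t}-2\,\frac{\sqrt{1-t}}{t}$, differentiate the first term with (\ref{Dn(1/t)}), and apply Leibniz's formula (\ref{Leibniz_formula}) to the second with $f(t)=1/t$ and $g(t)=\sqrt{1-t}$, using (\ref{Dn(1/t)}) and (\ref{Dn(sqrt(1-t))}) for the two families of derivatives. Pulling the common factor $\frac{\left(-1\right)^{n}n!}{t^{n+1}}\sqrt{1-t}$ out of the Leibniz sum and rewriting $\left(-1\right)^{n-k}t^{k}/(1-t)^{k}=\left(-1\right)^{n}\bigl(t/(t-1)\bigr)^{k}$ (using $\left(-1\right)^{-k}=\left(-1\right)^{k}$ and $-t/(1-t)=t/(t-1)$), one obtains
\begin{equation*}
\frac{d^{n}}{dt^{n}}\!\left[\frac{2}{t}\left(1-\sqrt{1-t}\right)\right]=\frac{2\left(-1\right)^{n}n!}{t^{n+1}}\left[1-\sqrt{1-t}\sum_{k=0}^{n}\frac{\left(-\tfrac12\right)_{k}}{k!}\left(\frac{t}{t-1}\right)^{k}\right].
\end{equation*}
Multiplying by $\left(n+1\right)/\left(\tfrac12\right)_{n}$ and using $\left(n+1\right)n!=\left(n+1\right)!$ yields precisely the $t\neq0,1$ branch of (\ref{2F1_resultado_(1)}). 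Since both sides are holomorphic on $\mathbb{C}\setminus\{0,1\}$ and the identity has been established on the disc $\left\vert t\right\vert<1$, it extends to all $t\neq0,1$ by analytic continuation.

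It then remains to dispose of the two exceptional points. The value at $t=0$ is $1$ by (\ref{Hyper_z=1}). For $t=1$ with $n=0$ the parameters satisfy $c-a-b=\tfrac12>0$, so Gauss's summation (\ref{Gauss_sum}) applies and gives $\Gamma(2)\Gamma(\tfrac12)/\bigl(\Gamma(\tfrac32)\Gamma(1)\bigr)=2$. For $t=1$ with $n\geq1$ one has $c-a-b=\tfrac12-n\leq-\tfrac12$; the general term $\left(\tfrac12+n\right)_{k}\left(1+n\right)_{k}/\bigl(\left(2+n\right)_{k}\,k!\bigr)$ is positive and, by Stirling's formula, asymptotic to a positive constant times $k^{\,a+b-c-1}=k^{\,n-3/2}$ as $k\to\infty$; since $n-\tfrac32\geq-\tfrac12>-1$ the series diverges to $+\infty$, matching the last line of (\ref{2F1_resultado_(1)}).

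The argument is essentially bookkeeping, and I expect no serious obstacle. The only points that need genuine care are the sign-tracking in the Leibniz expansion—in particular the emergence of the argument $t/(t-1)$ rather than $t/(1-t)$—and the logical order of the continuation step: the parameter-shift differentiation is directly licensed only on $\left\vert t\right\vert<1$, so the identity on the remainder of $\mathbb{C}\setminus\{0,1\}$ must be obtained by analytic continuation rather than by differentiating the closed form termwise.
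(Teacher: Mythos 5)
Your proposal is correct and follows essentially the same route as the paper: differentiate the elementary identity for ${}_{2}F_{1}(\tfrac12,1;2;t)$ $n$ times via the parameter-shift formula, evaluate the derivative with Leibniz's rule and the formulas for $d^{n}(1/t)/dt^{n}$ and $d^{n}\sqrt{1-t}/dt^{n}$, and settle $t=0$ and $t=1$ by the series value at the origin and Gauss's summation theorem. The extra remarks on analytic continuation and on the divergence rate for $t=1$, $n\geq 1$ are fine but not needed beyond what the paper already does.
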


\begin{proof}
For $t\neq 0,1$, apply the following differentiation formula for the Gauss
hypergeometric function \cite[Eqn. 15.5.2]{DLMF}:\
\begin{eqnarray}
&&\frac{d^{n}}{dt^{n}}\left[ _{2}F_{1}\left( \left.
\begin{array}{c}
a,b \\
c%
\end{array}%
\right\vert t\right) \right] =\frac{\left( a\right) _{n}\left( b\right) _{n}%
}{\left( c\right) _{n}}\,_{2}F_{1}\left( \left.
\begin{array}{c}
a+n,b+n \\
c+n%
\end{array}%
\right\vert t\right) .  \label{Dn_(1)} \\
&&n=0,1,2,\ldots  \notag
\end{eqnarray}%
Therefore, taking $a=\frac{1}{2}$, $b=1$ and $c=2$ in (\ref{Dn_(1)}) and
using (\ref{2F1_resultado}), we have
\begin{equation*}
\frac{d^{n}}{dt^{n}}\left[ _{2}F_{1}\left( \left.
\begin{array}{c}
\frac{1}{2},1 \\
2%
\end{array}%
\right\vert t\right) \right] =2\left[ \frac{d^{n}}{dt^{n}}\left( \frac{1}{t}%
\right) -\frac{d^{n}}{dt^{n}}\left( \frac{\sqrt{1-t}}{t}\right) \right] .
\end{equation*}%
Applying (\ref{Dn(1/t)})-(\ref{Dn(sqrt(1-t))}) and (\ref{Leibniz_formula}),
after some algebra, we arrive at (\ref{2F1_resultado_(1)}) for $t\neq 0,1$.

For $t=0$, apply (\ref{Hyper_z=1}).

For $t=1$, apply Gauss summation formula (\ref{Gauss_sum}). This completes
the proof.
\end{proof}

\begin{corollary}
For $n=0,1,2,\ldots $ and $t\in
\mathbb{C}
$, the following reduction formula holds true:%
\begin{eqnarray}
&&\,_{2}F_{1}\left( \left.
\begin{array}{c}
1+2n,\frac{3}{2}+n \\
3+2n%
\end{array}%
\right\vert t\right)   \label{2F1_quadratic} \\
&=&\left\{
\begin{array}{ll}
\begin{array}{l}
\displaystyle%
\frac{\left( -1\right) ^{n}\left( n+1\right) !}{\left( \frac{1}{2}\right)
_{n}\,}\left( \frac{2}{t}\right) ^{2\left( n+1\right) } \\
\displaystyle%
\left[ 2-t-2\sqrt{1-t}\sum_{k=0}^{n}\frac{\left( -\frac{1}{2}\right) _{k}}{k!%
}\left( \frac{t}{2\sqrt{t-1}}\right) ^{2k}\right] ,%
\end{array}
& t\neq 0,1 \\
1, & t=0, \\
4, & t=1,n=0, \\
\infty , & t=1,n\geq 1.%
\end{array}%
\right.   \notag
\end{eqnarray}
\end{corollary}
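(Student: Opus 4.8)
The plan is to reduce the hypergeometric function on the left of (\ref{2F1_quadratic}) to the one evaluated in the preceding theorem, i.e.\ to ${}_{2}F_{1}\bigl(\tfrac{1}{2}+n,1+n;2+n;t\bigr)$ as given by (\ref{2F1_resultado_(1)}), by means of a quadratic transformation. The key observation is that the series on the left of (\ref{2F1_quadratic}) has the shape ${}_{2}F_{1}(a,b;2b;t)$ with $a=1+2n$ and $b=\tfrac{3}{2}+n$, since $2b=3+2n$. First I would invoke the quadratic transformation \cite[Eqn. 15.8.15]{DLMF}
$$
{}_{2}F_{1}\!\left(\left.\begin{array}{c}a,b\\2b\end{array}\right|z\right)=\left(1-\frac{z}{2}\right)^{-a}\,{}_{2}F_{1}\!\left(\left.\begin{array}{c}\frac{a}{2},\frac{a+1}{2}\\b+\frac{1}{2}\end{array}\right|\left(\frac{z}{2-z}\right)^{2}\right),
$$
which, with the parameters above, rewrites the left-hand side of (\ref{2F1_quadratic}) as $\left(1-\tfrac{t}{2}\right)^{-(1+2n)}\,{}_{2}F_{1}\bigl(\tfrac{1}{2}+n,1+n;2+n;u\bigr)$ with $u=\bigl(\tfrac{t}{2-t}\bigr)^{2}$.

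Next I would substitute into this the closed form (\ref{2F1_resultado_(1)}); this is legitimate because $u\neq 0,1$ exactly when $t\neq 0,1$, and a neighbourhood of $t=0$ is carried to a neighbourhood of $u=0$, so the resulting identity between analytic functions of $t$ (established first for small $t$) extends by analytic continuation, with the principal branch of $\sqrt{1-t}$. The substitution rests on the elementary identities
$$
1-u=\frac{4(1-t)}{(2-t)^{2}},\qquad \frac{u}{u-1}=\frac{t^{2}}{4(t-1)}=\left(\frac{t}{2\sqrt{t-1}}\right)^{2},\qquad u^{\,n+1}=\left(\frac{t}{2-t}\right)^{2(n+1)},
$$
and hence $\sqrt{1-u}=\frac{2\sqrt{1-t}}{2-t}$ for the branch that is positive near $t=0$. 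Multiplying by $\left(1-\tfrac{t}{2}\right)^{-(1+2n)}=2^{\,1+2n}(2-t)^{-(1+2n)}$ and collecting the powers of $2$, of $t$ and of $2-t$ — the exponent of $2-t$ being $2(n+1)-(1+2n)=1$, so a single factor $2-t$ survives and is absorbed into the square bracket — produces exactly the first branch of (\ref{2F1_quadratic}).

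For the exceptional arguments: at $t=0$ use (\ref{Hyper_z=1}); at $t=1$ apply Gauss's summation formula (\ref{Gauss_sum}) with $a=1+2n$, $b=\tfrac{3}{2}+n$ and $c=3+2n$, so that $c-a-b=\tfrac{1}{2}-n$. For $n=0$ this is positive and yields $\frac{\Gamma(3)\Gamma(1/2)}{\Gamma(2)\Gamma(3/2)}=4$; for $n\geq 1$ it is a negative half-integer, so the defining series has positive terms of order $k^{\,n-3/2}$ and diverges to $+\infty$.

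I expect the only genuine effort to be bookkeeping: keeping the branch of the square root consistent under the change of argument $t\mapsto u$, and correctly gathering the powers of $2$, $t$ and $2-t$ so that the prefactor comes out as $\bigl(\tfrac{2}{t}\bigr)^{2(n+1)}$ and the square bracket as $\bigl[\,2-t-2\sqrt{1-t}\sum_{k=0}^{n}\tfrac{(-1/2)_{k}}{k!}\bigl(\tfrac{t}{2\sqrt{t-1}}\bigr)^{2k}\bigr]$. There is no conceptual difficulty once the correct quadratic transformation has been selected: the alternative one, with argument $z^{2}/(4z-4)$, leads instead to a ${}_{2}F_{1}$ with parameters $\tfrac{1}{2}+n,1$ over $2+n$, which is not the function supplied by (\ref{2F1_resultado_(1)}).
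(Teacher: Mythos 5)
Your proposal is correct and follows essentially the same route as the paper: the same quadratic transformation ${}_{2}F_{1}(\alpha,\beta;2\beta;x)=(1-x/2)^{-\alpha}\,{}_{2}F_{1}(\alpha/2,(\alpha+1)/2;\beta+1/2;(x/(2-x))^{2})$ applied to reduce the left-hand side to ${}_{2}F_{1}(\tfrac12+n,1+n;2+n;\cdot)$, followed by substitution of the previous theorem's closed form, with Gauss's sum handling $t=1$. The only cosmetic difference is that you work directly with $u=(t/(2-t))^{2}$ while the paper parameterizes the argument as $2\sqrt{z}/(1+\sqrt{z})$ and then changes variables back, which amounts to the same computation.
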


\begin{proof}
Apply the quadratic transformation \cite[Eqn. 3.1.7]{Andrews}:%
\begin{equation*}
_{2}F_{1}\left( \left.
\begin{array}{c}
\alpha ,\beta \\
2\beta%
\end{array}%
\right\vert x\right) =\left( 1-\frac{x}{2}\right) ^{-\alpha
}\,_{2}F_{1}\left( \left.
\begin{array}{c}
\frac{\alpha }{2},\frac{\alpha +1}{2} \\
\beta +\frac{1}{2}%
\end{array}%
\right\vert \left( \frac{x}{2-x}\right) ^{2}\right) ,
\end{equation*}%
taking $\alpha =2n+1$, $\beta =n+\frac{3}{2}$, and $x=\frac{2\sqrt{z}}{1+%
\sqrt{z}}$ to arrive at:%
\begin{equation}
\,_{2}F_{1}\left( \left.
\begin{array}{c}
1+2n,\frac{3}{2}+n \\
3+2n%
\end{array}%
\right\vert \frac{2\sqrt{z}}{1+\sqrt{z}}\right) =\left( 1+\sqrt{z}\right)
^{2n+1}\,_{2}F_{1}\left( \left.
\begin{array}{c}
\frac{1}{2}+n,1+n \\
2+n%
\end{array}%
\right\vert z\right) .  \label{Quadratic}
\end{equation}%
In order to obtain the desired result for $t\neq 0,1$, substitute (\ref%
{2F1_resultado_(1)})\ in (\ref{Quadratic})\ and perform the change of
variables $t=\frac{2\sqrt{z}}{1+\sqrt{z}}$. According to this last result,
the cases given in (\ref{2F1_quadratic})\ for $t=1$ are straightforward.
However, for $t=0$, we have an indeterminate expression on the RHS\ of (\ref%
{2F1_quadratic}). On the one hand, according to (\ref{Hyper_z=1}), we have%
\begin{equation*}
\lim_{t\rightarrow 0}\,_{2}F_{1}\left( \left.
\begin{array}{c}
1+2n,\frac{3}{2}+n \\
3+2n%
\end{array}%
\right\vert t\right) =1.
\end{equation*}%
On the other hand, we calculate the limit $t\rightarrow 0$ of the RHS of (%
\ref{2F1_quadratic}), taking into account the formula \cite[Eqn. 18:3:4]%
{Atlas}:%
\begin{equation}
\frac{1}{\left( 1-t\right) ^{\nu }}=\sum_{k=0}^{\infty }\left( \nu \right)
_{k}\frac{t^{k}}{k!},  \label{Binomial}
\end{equation}%
thereby
\begin{eqnarray*}
&&\frac{\left( -1\right) ^{n}\left( n+1\right) !}{\left( \frac{1}{2}\right)
_{n}\,}\lim_{t\rightarrow 0}\left( \frac{2}{t}\right) ^{2\left( n+1\right) }
\\
&&\left\{ 2-t-2\sqrt{1-t}\left[ \sum_{k=0}^{\infty }\frac{\left( -\frac{1}{2}%
\right) _{k}}{k!}\left( \frac{t^{2}}{4\left( t-1\right) }\right)
^{k}-\sum_{k=n+1}^{\infty }\frac{\left( -\frac{1}{2}\right) _{k}}{k!}\left(
\frac{t^{2}}{4\left( t-1\right) }\right) ^{k}\right] \right\} \\
&=&\frac{\left( -1\right) ^{n}\left( n+1\right) !}{\left( \frac{1}{2}\right)
_{n}\,}\lim_{t\rightarrow 0}\left( \frac{2}{t}\right) ^{2\left( n+1\right) }
\\
&&\left\{ 2-t-2\sqrt{1-t}\left[ \frac{2-t}{2\sqrt{1-t}}-\sum_{k=n+1}^{\infty
}\frac{\left( -\frac{1}{2}\right) _{k}}{k!}\left( \frac{t^{2}}{4\left(
t-1\right) }\right) ^{k}\right] \right\} \\
&=&-\frac{2\left( -\frac{1}{2}\right) _{n+1}}{\left( \frac{1}{2}\right)
_{n}\,}=1,
\end{eqnarray*}%
where we have applied (\ref{(x)_n+1})\ for $x=-\frac{1}{2}$.
\end{proof}

\begin{corollary}
For $n=0,1,2,\ldots $ and $t\in
\mathbb{C}
$, the following reduction formula holds true:%
\begin{eqnarray}
&&\mathrm{B}\left( 1+n,\frac{1}{2}-n,t\right)  \label{beta_resultado} \\
&=&\left\{
\begin{array}{ll}
\displaystyle%
\frac{2(-1)^{n}n!}{\left( \frac{1}{2}\right) _{n}}\left[ 1-\sqrt{1-t}%
\sum_{k=0}^{n}\frac{\left( -\frac{1}{2}\right) _{k}}{k!}\left( \frac{t}{t-1}%
\right) ^{k}\right] , & t\neq 0,1, \\
\frac{2(-1)^{n}n!}{\left( \frac{1}{2}\right) _{n}}, & t=1, \\
0, & t=0.%
\end{array}%
\right.  \notag
\end{eqnarray}%
where $\mathrm{B}\left( \nu ,\mu ,z\right) $ denotes the \textit{incomplete
beta function }\cite[Chap. 58]{Atlas}.
\end{corollary}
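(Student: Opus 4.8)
The plan is to reduce the incomplete beta function to the Gauss hypergeometric function whose closed form was just obtained in (\ref{2F1_resultado_(1)}). The bridge is the classical representation
\[
\mathrm{B}\left( \nu ,\mu ,t\right) =\int_{0}^{t}u^{\nu -1}\left( 1-u\right) ^{\mu -1}\,du=\frac{t^{\nu }}{\nu }\,_{2}F_{1}\left( \left.
\begin{array}{c}
\nu ,1-\mu \\
1+\nu
\end{array}
\right\vert t\right) ,
\]
which I would justify by expanding $\left( 1-u\right) ^{\mu -1}=\left( 1-u\right) ^{-\left( 1-\mu \right) }$ with the binomial series (\ref{Binomial}), integrating term by term, and rewriting $1/\left( \nu +k\right) =\left( \nu \right) _{k}/\left[ \nu \left( 1+\nu \right) _{k}\right] $; this identity is also recorded in \cite[Chap.~58]{Atlas}.

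Specializing to $\nu =1+n$ and $\mu =\frac{1}{2}-n$ gives $1-\mu =\frac{1}{2}+n$ and $1+\nu =2+n$, so that
\[
\mathrm{B}\left( 1+n,\tfrac{1}{2}-n,t\right) =\frac{t^{1+n}}{1+n}\,_{2}F_{1}\left( \left.
\begin{array}{c}
\frac{1}{2}+n,1+n \\
2+n
\end{array}
\right\vert t\right) ,
\]
where I have used that the two numerator parameters of $_{2}F_{1}$ may be interchanged. For $t\neq 0,1$ I would now substitute (\ref{2F1_resultado_(1)}): the factor $t^{1+n}$ cancels the $t^{n+1}$ in the denominator and, since $\left( n+1\right) !/\left( 1+n\right) =n!$, the constant collapses to $2\left( -1\right) ^{n}n!/\left( \frac{1}{2}\right) _{n}$, producing exactly the stated closed form. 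For $t=0$ the defining integral is empty, so the value is $0$ (equivalently, read off the leading term of the series above).

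The case $t=1$ is the one requiring separate treatment, and it is where the argument is most delicate: one cannot pass to the limit $t\to 1$ in the $t\neq 0,1$ formula, because for $n\geq 1$ the summands $\sqrt{1-t}\left( t/(t-1)\right) ^{k}$ behave like $\left( 1-t\right) ^{1/2-k}$ and blow up, exactly as $_{2}F_{1}$ itself does at $t=1$ for $n\geq 1$. Instead I would identify $\mathrm{B}\left( 1+n,\frac{1}{2}-n,1\right) $ with the complete beta function $\Gamma \left( 1+n\right) \Gamma \left( \frac{1}{2}-n\right) /\Gamma \left( \frac{3}{2}\right) $ (its value by analytic continuation in the second parameter, as in \cite[Chap.~58]{Atlas}) and then simplify: $\Gamma \left( 1+n\right) =n!$, $\Gamma \left( \frac{3}{2}\right) =\frac{1}{2}\sqrt{\pi }$, and from $\Gamma \left( \frac{1}{2}+n\right) \Gamma \left( \frac{1}{2}-n\right) =\pi /\sin \pi \left( \frac{1}{2}+n\right) =\left( -1\right) ^{n}\pi $ together with $\Gamma \left( \frac{1}{2}+n\right) =\left( \frac{1}{2}\right) _{n}\sqrt{\pi }$ one gets $\Gamma \left( \frac{1}{2}-n\right) =\left( -1\right) ^{n}\sqrt{\pi }/\left( \frac{1}{2}\right) _{n}$, hence the value $2\left( -1\right) ^{n}n!/\left( \frac{1}{2}\right) _{n}$; for $n=0$ this is consistent with the convergent integral $\int_{0}^{1}\left( 1-u\right) ^{-1/2}du=2$. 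In short, the only real obstacle is the $t=1$ endpoint, where one must invoke the complete beta value and the gamma reflection formula rather than a limiting argument; the rest is bookkeeping with Pochhammer symbols once the beta--$_{2}F_{1}$ dictionary and (\ref{2F1_resultado_(1)}) are in place.
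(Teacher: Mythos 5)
Your proposal is correct and follows essentially the same route as the paper: the identity $\mathrm{B}(\nu,\mu,t)=\frac{t^{\nu}}{\nu}\,_{2}F_{1}(\nu,1-\mu;1+\nu;t)$ you derive from the binomial series is precisely the Prudnikov formula (\ref{2F1_beta}) the paper cites, and the $t=0$ and $t=1$ endpoint cases are handled identically (empty integral, respectively complete beta plus $\Gamma(\tfrac12-n)=(-1)^{n}\sqrt{\pi}/(\tfrac12)_{n}$). The only difference is cosmetic: you prove the beta--hypergeometric dictionary rather than quoting it, and you are slightly more explicit that the $t=1$ value for $n\geq 1$ is to be read by analytic continuation.
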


\begin{proof}
For $t\neq 0,1$, in \cite[Eqn. 7.3.1(28)]{Prudnikov3}, we found:%
\begin{equation}
_{2}F_{1}\left( \left.
\begin{array}{c}
a,b \\
b+1%
\end{array}%
\right\vert t\right) =b\ t^{-b}\ \mathrm{B}\left( b,1-a,t\right) ,
\label{2F1_beta}
\end{equation}%
Therefore, take $a=\frac{1}{2}+n$ and $b=1+n$ in (\ref{2F1_beta})\ and apply
(\ref{2F1_resultado_(1)}) to obtain (\ref{beta_resultado}).

For $t=1$, apply the properties of the incomplete beta function \cite[Eqns.
58:3:1\&58:1:1]{Atlas}%
\begin{equation*}
\mathrm{B}\left( \nu ,\mu ,1\right) =\mathrm{B}\left( \nu ,\mu \right) =%
\frac{\Gamma \left( \nu \right) \Gamma \left( \mu \right) }{\Gamma \left(
\nu +\mu \right) },
\end{equation*}%
and the formula of the gamma function \cite[Eqn. 43:4:4]{Atlas}%
\begin{equation*}
\Gamma \left( \frac{1}{2}-n\right) =\frac{\left( -1\right) ^{n}}{\left(
\frac{1}{2}\right) _{n}}\sqrt{\pi },
\end{equation*}%
to obtain the desired result.

For $t=0$, apply the definition of the incomplete beta function \cite[%
Eqn.58:3:1]{Atlas}, and calculate the limit $t\rightarrow 0$ for $n\geq 0$,
to obtain:%
\begin{equation*}
\lim_{t\rightarrow 0}\mathrm{B}\left( 1+n,\mu ,t\right) =\lim_{t\rightarrow
0}\int_{0}^{t}x^{n}\left( 1-x\right) ^{\mu -1}dx=0.
\end{equation*}%

\end{proof}

It is worth noting that we can derive a different elementary representation
of $_{2}F_{1}\left( \frac{1}{2}+n,1+n;2+n;t\right) $ by using known formulas
given in the literature.

\begin{theorem}
For $n=0,1,2,\ldots $ and $t\in
\mathbb{C}
$, the following reduction formula holds true:%
\begin{eqnarray}
&&\,_{2}F_{1}\left( \left.
\begin{array}{c}
\frac{1}{2}+n,1+n \\
2+n%
\end{array}%
\right\vert t\right)  \label{2F1_resultado_(1)b} \\
&=&\left\{
\begin{array}{ll}
\displaystyle%
\frac{2\left( -1\right) ^{n}\left( n+1\right) !}{\left( \frac{1}{2}\right)
_{n}\,t^{n+1}}\left[ 1-\left( 1-t\right) ^{1/2-n}\sum_{k=0}^{n}\frac{\left(
\frac{1}{2}-n\right) _{k}}{k!}t^{k}\right] , & t\neq 0,1, \\
1, & t=0, \\
2, & t=1,n=0, \\
\infty , & t=1,n\geq 1.%
\end{array}%
\right.  \notag
\end{eqnarray}
\end{theorem}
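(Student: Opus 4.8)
The plan is to derive (\ref{2F1_resultado_(1)b}) from the reduction formula (\ref{2F1_resultado_(1)}) that has already been established. The two statements have the same left-hand side, the same prefactor $2(-1)^{n}(n+1)!/[(\tfrac12)_{n}\,t^{n+1}]$, and the same entries at $t=0$ and $t=1$, so the cases $t=0,1$ require nothing new, and for $t\neq 0,1$ it suffices to prove the elementary identity
\[
\sqrt{1-t}\,\sum_{k=0}^{n}\frac{\left(-\tfrac12\right)_{k}}{k!}\left(\frac{t}{t-1}\right)^{k}
=(1-t)^{1/2-n}\sum_{k=0}^{n}\frac{\left(\tfrac12-n\right)_{k}}{k!}\,t^{k}.
\]

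First I would clear the fractional powers. Writing $\left(\tfrac{t}{t-1}\right)^{k}=(-1)^{k}t^{k}(1-t)^{-k}$ on the left and multiplying the whole identity by $(1-t)^{n-1/2}$, it becomes the polynomial identity
\[
\sum_{k=0}^{n}\frac{(-1)^{k}\left(-\tfrac12\right)_{k}}{k!}\,t^{k}(1-t)^{n-k}
=\sum_{k=0}^{n}\frac{\left(\tfrac12-n\right)_{k}}{k!}\,t^{k},
\]
where both sides are polynomials in $t$ of degree at most $n$; once this holds, the previous identity is recovered by dividing by $(1-t)^{n-1/2}$.

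To prove the polynomial identity I would argue with formal power series. Using the binomial series one has $(-1)^{k}(-\tfrac12)_{k}/k!=\binom{1/2}{k}$ and $(\tfrac12-n)_{k}/k!=(-1)^{k}\binom{n-1/2}{k}$, so the right-hand side is exactly the truncation to degree $n$ of the Taylor expansion of $(1-t)^{n-1/2}$. On the other hand, the binomial theorem $\sum_{k\ge 0}\binom{1/2}{k}v^{k}=(1+v)^{1/2}$ with $v=\tfrac{t}{1-t}$, multiplied by $(1-t)^{n}$, gives as formal power series
\[
(1-t)^{n-1/2}=\sum_{k=0}^{\infty}\binom{1/2}{k}t^{k}(1-t)^{n-k}
=\sum_{k=0}^{n}\binom{1/2}{k}t^{k}(1-t)^{n-k}+R(t),
\]
and since every summand with $k>n$ has lowest-order term $t^{k}$, the remainder $R(t)$ is divisible by $t^{n+1}$. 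Hence the degree-$\le n$ part of $(1-t)^{n-1/2}$ coincides with $\sum_{k=0}^{n}\binom{1/2}{k}t^{k}(1-t)^{n-k}$, i.e. with the left-hand side of the polynomial identity, and comparison with the truncation identified above completes the proof. (Equivalently, extracting the coefficient of $t^{m}$ on the two sides turns the polynomial identity into a terminating Chu--Vandermonde sum.)

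I do not expect a genuine obstacle here: the only care needed is bookkeeping — checking that the truncation argument is applied to honest polynomials of degree $\le n$, so that agreement modulo $t^{n+1}$ forces agreement as polynomials and hence for every $t\in\mathbb{C}$, and keeping straight the conversions between the Pochhammer symbols $(-\tfrac12)_{k}$, $(\tfrac12-n)_{k}$ and the binomial coefficients (where (\ref{(x)_n+1}) is convenient if one prefers to stay with Pochhammer symbols). A more computational alternative that bypasses (\ref{2F1_resultado_(1)}) is to use $_{2}F_{1}(a,b;b+1;t)=b\,t^{-b}\,\mathrm{B}(b,1-a,t)$ with $a=\tfrac12+n$ and $b=1+n$, evaluate $\mathrm{B}(1+n,\tfrac12-n,t)=\int_{0}^{t}x^{n}(1-x)^{-n-1/2}\,dx$ in closed form by integrating by parts $n$ times, and then regroup the resulting elementary expression into the stated form.
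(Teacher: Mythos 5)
Your proposal is correct, but it follows a genuinely different route from the paper. The paper proves the formula independently: it first applies Euler's transformation to write
\begin{equation*}
_{2}F_{1}\left( \left.
\begin{array}{c}
\frac{1}{2}+n,1+n \\
2+n
\end{array}
\right\vert t\right) =\left( 1-t\right) ^{1/2-n}\,_{2}F_{1}\left( \left.
\begin{array}{c}
\frac{3}{2},1 \\
2+n
\end{array}
\right\vert t\right) ,
\end{equation*}
and then invokes the tabulated closed form (\ref{Formula_Prudnikov}) for $_{2}F_{1}(1,b;m;z)$ with $m=n+2$, $b=\frac{3}{2}$, handling $t=0$ by a separate limit computation. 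You instead take the already-established result (\ref{2F1_resultado_(1)}) as the starting point and reduce the claim to the elementary identity
\begin{equation*}
\sum_{k=0}^{n}\binom{1/2}{k}t^{k}\left( 1-t\right) ^{n-k}=\sum_{k=0}^{n}\left( -1\right) ^{k}\binom{n-1/2}{k}t^{k},
\end{equation*}
which you correctly verify by observing that both sides are polynomials of degree at most $n$ agreeing with $(1-t)^{n-1/2}$ modulo $t^{n+1}$; your Pochhammer-to-binomial conversions and the truncation argument all check out, and the $t=0,1$ cases do carry over from the earlier theorem. What each approach buys: yours is more self-contained (no reliance on the Prudnikov table entry 7.3.1(123)) and makes transparent \emph{why} the two elementary representations coincide, but it is logically dependent on (\ref{2F1_resultado_(1)}); the paper's derivation is independent of the first theorem and therefore doubles as a cross-check of it, which is part of the point of presenting a ``different elementary representation'' obtained from known formulas in the literature.
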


\begin{proof}
First we prove (\ref{2F1_resultado_(1)b}) for $t\neq 0,1$. Apply Euler's
transformation formula \cite[Eqn. 15.8.1]{DLMF}:
\begin{equation*}
_{2}F_{1}\left( \left.
\begin{array}{c}
\alpha ,\beta \\
\gamma%
\end{array}%
\right\vert z\right) =\left( 1-z\right) ^{\gamma -\alpha -\beta
}\,_{2}F_{1}\left( \left.
\begin{array}{c}
\gamma -\alpha ,\gamma -\beta \\
\gamma%
\end{array}%
\right\vert z\right) ,
\end{equation*}%
to obtain%
\begin{equation}
_{2}F_{1}\left( \left.
\begin{array}{c}
\frac{1}{2}+n,1+n \\
2+n%
\end{array}%
\right\vert t\right) =\left( 1-t\right) ^{1/2-n}\,_{2}F_{1}\left( \left.
\begin{array}{c}
\frac{3}{2},1 \\
2+n%
\end{array}%
\right\vert t\right) .  \label{2F1_(1)b_intermedio}
\end{equation}%
We found in \cite[Eqn. 7.3.1(123)]{Prudnikov3} for $m=1,2,\ldots $, and $%
m-b\neq 1,2,\ldots $, the formula:
\begin{equation}
_{2}F_{1}\left( \left.
\begin{array}{c}
1,b \\
m%
\end{array}%
\right\vert z\right) =\frac{\left( m-1\right) !\left( -z\right) ^{1-m}}{%
\left( 1-b\right) _{m-1}}\left[ \left( 1-z\right) ^{m-b-1}-\sum_{k=0}^{m-2}%
\frac{\left( b-m+1\right) _{k}}{k!}z^{k}\right] .  \label{Formula_Prudnikov}
\end{equation}%
Therefore, apply (\ref{Formula_Prudnikov})\ to (\ref{2F1_(1)b_intermedio})\
with $m=n+2$ and $b=\frac{3}{2}$, taking into account (\ref{(x)_n+1})\ for $%
x=-\frac{1}{2}$, to arrive at (\ref{2F1_resultado_(1)b})\ for $t\neq 0,1$.

Straightforward from (\ref{2F1_resultado_(1)b})\ for $t\neq 0,1$, we have a
divergent result for $t=1$, except for $n=0$.

For $t=0$, we have\ an indeterminate expression on the RHS of (\ref%
{2F1_resultado_(1)b}). On the one hand, according to (\ref{Hyper_z=1}), we
have%
\begin{equation}
\lim_{t\rightarrow 0}\,_{2}F_{1}\left( \left.
\begin{array}{c}
\frac{1}{2}+n,1+n \\
2+n%
\end{array}%
\right\vert t\right) =1.  \label{2F1_t=0}
\end{equation}%
On the other hand, we calculate the limit $t\rightarrow 0$ of the RHS of (%
\ref{2F1_resultado_(1)b}) taking into account (\ref{Binomial}). Thereby
\begin{eqnarray*}
&&\frac{2\left( -1\right) ^{n}\left( n+1\right) !}{\left( \frac{1}{2}\right)
_{n}\,}\lim_{t\rightarrow 0}\frac{\left( 1-t\right) ^{1/2-n}}{t^{n+1}}\left[
\frac{1}{\left( 1-t\right) ^{1/2-n}}-\sum_{k=0}^{n}\frac{\left( \frac{1}{2}%
-n\right) _{k}}{k!}t^{k}\right] \\
&=&\frac{2\left( -1\right) ^{n}\left( n+1\right) !}{\left( \frac{1}{2}%
\right) _{n}\,}\lim_{t\rightarrow 0}\frac{1}{t^{n+1}}\sum_{k=n+1}^{\infty }%
\frac{\left( \frac{1}{2}-n\right) _{k}}{k!}t^{k} \\
&=&\frac{2\left( -1\right) ^{n}\left( \frac{1}{2}-n\right) _{n+1}}{\left(
\frac{1}{2}\right) _{n}\,}=1,
\end{eqnarray*}%
where we have applied the property $\Gamma \left( z\right) \Gamma \left(
1-z\right) =\frac{\pi }{\sin \pi z}$ \cite[Eqn. 1.2.2]{Lebedev}.
\end{proof}

\begin{theorem}
For $n=0,1,2,\ldots $ and $\mathrm{Re}\left( s+x\right) >0$, we have%
\begin{eqnarray}
&&\int_{0}^{\infty }\frac{e^{-st}}{t^{3/2}}\gamma \left( n+1,xt\right) dt
\label{Int_1_resultado} \\
&=&-2\sqrt{\pi }n!\left[ \sqrt{s}-\sqrt{s+x}\sum_{k=0}^{n}\frac{\left( -%
\frac{1}{2}\right) _{k}}{k!}\left( \frac{x}{x+s}\right) ^{k}\right] ,  \notag
\end{eqnarray}%
where $\gamma \left( \nu ,z\right) $ denotes the lower incomplete gamma
function \cite[Chap. 45]{Atlas}.
\end{theorem}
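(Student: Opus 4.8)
The plan is to turn the integral into an instance of the Laplace‑type transform (\ref{Int_Andrews}) of a confluent hypergeometric function, and then to invoke the reduction formula (\ref{2F1_resultado_(1)}). The starting point is the elementary representation of the lower incomplete gamma function obtained by expanding $e^{-u}$ in $\gamma(\nu,z)=\int_{0}^{z}u^{\nu-1}e^{-u}\,du$ and integrating term by term,
\[
\gamma\left(\nu,z\right)=\frac{z^{\nu}}{\nu}\,{}_{1}F_{1}\!\left(\left.\begin{array}{c}\nu\\\nu+1\end{array}\right|-z\right),\qquad \mathrm{Re}\,\nu>0 .
\]
Taking $\nu=n+1$ and $z=xt$ gives $\gamma(n+1,xt)=\dfrac{(xt)^{n+1}}{n+1}\,{}_{1}F_{1}(n+1;n+2;-xt)$.

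Substituting this into the left‑hand side of (\ref{Int_1_resultado}) yields
\[
\int_{0}^{\infty}\frac{e^{-st}}{t^{3/2}}\,\gamma(n+1,xt)\,dt=\frac{x^{n+1}}{n+1}\int_{0}^{\infty}e^{-st}\,t^{\,n-1/2}\,{}_{1}F_{1}\!\left(\left.\begin{array}{c}n+1\\n+2\end{array}\right|-xt\right)dt ,
\]
and the integrand on the right is of the form $e^{-st}t^{\alpha-1}\,{}_{1}F_{1}(\cdots\mid -xt)$ with $\alpha=n+\tfrac12$. I would then apply (\ref{Int_Andrews}) with $p=q=1$, $a_{1}=n+1$, $b_{1}=n+2$, $\alpha=n+\tfrac12$, and with the parameter $x$ there replaced by $-x$; since $\mathrm{Re}\,\alpha=n+\tfrac12>0$ and $p\le q$, this is legitimate for $\mathrm{Re}\,s>0$ and gives
\[
\int_{0}^{\infty}\frac{e^{-st}}{t^{3/2}}\,\gamma(n+1,xt)\,dt=\frac{x^{n+1}}{n+1}\,\frac{\Gamma\!\left(n+\tfrac12\right)}{s^{\,n+1/2}}\,{}_{2}F_{1}\!\left(\left.\begin{array}{c}n+1,\,n+\tfrac12\\n+2\end{array}\right|-\frac{x}{s}\right).
\]
The ${}_{2}F_{1}$ on the right is exactly $_{2}F_{1}\!\left(\tfrac12+n,1+n;2+n\mid t\right)$ evaluated at $t=-x/s$, so the reduction formula (\ref{2F1_resultado_(1)}) applies directly. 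The value $t=1$ excluded there corresponds to $s+x=0$, ruled out by hypothesis, and $t=0$ corresponds to $x=0$, a trivial case since then $\gamma(n+1,0)=0$ on the left and $\sqrt{s}-\sqrt{s}=0$ on the right.

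What remains is bookkeeping: put $t=-x/s$ in (\ref{2F1_resultado_(1)}) and use $\dfrac{-x/s}{-x/s-1}=\dfrac{x}{x+s}$, $\sqrt{1-(-x/s)}=\sqrt{(s+x)/s}=\sqrt{s+x}/\sqrt{s}$, $(-x/s)^{n+1}=(-1)^{n+1}x^{n+1}/s^{n+1}$, together with $\Gamma(n+\tfrac12)=(\tfrac12)_{n}\sqrt{\pi}$ and $(n+1)!/(n+1)=n!$. The factors $(-1)^{n}$, $(\tfrac12)_{n}$ and $x^{n+1}$ cancel, the powers of $s$ collapse to $s^{n+1}s^{-(n+1/2)}=\sqrt{s}$, and one is left with precisely $-2\sqrt{\pi}\,n!\left[\sqrt{s}-\sqrt{s+x}\sum_{k=0}^{n}\frac{(-1/2)_{k}}{k!}\left(\frac{x}{x+s}\right)^{k}\right]$, i.e.\ the right‑hand side of (\ref{Int_1_resultado}).

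The genuinely delicate point is analytic rather than algebraic. The manipulations above are justified \emph{a priori} only for $\mathrm{Re}\,s>0$ (which, together with $\mathrm{Re}(s+x)>0$, is also what makes the integral converge — near $t=0$ the integrand is $O(t^{\,n-1/2})$, and at infinity $\gamma(n+1,xt)$ either tends to $n!$ or is dominated by $e^{-xt}(xt)^{n}$, so the integrand decays exponentially); one then extends to the stated range by analytic continuation, the right‑hand side being analytic off the locus $s=0$, $s+x=0$ with the principal branches of the square roots, on which $\sqrt{(s+x)/s}=\sqrt{s+x}/\sqrt{s}$ in the relevant sector. I expect this convergence‑and‑continuation verification, together with keeping the branch of the root straight, to be the only real obstacle; everything else is a direct substitution.
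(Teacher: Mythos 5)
Your proposal is correct and follows essentially the same route as the paper: both arguments combine the Laplace-type integral (\ref{Int_Andrews}) with the reduction formula (\ref{2F1_resultado_(1)}) and the representation of $\gamma(\nu,z)$ as a ${}_{1}F_{1}$, differing only in that you convert $\gamma$ to ${}_{1}F_{1}$ before integrating whereas the paper integrates the ${}_{1}F_{1}$ first and identifies it with $\gamma$ afterwards. Your explicit attention to convergence and analytic continuation is a welcome addition the paper leaves implicit.
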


\begin{proof}
Indeed, take $a_{1}=1+n$, $\alpha =\frac{1}{2}+n$ and $b_{1}=2+n$ in (\ref%
{Int_Andrews}), consider the result (\ref{2F1_resultado_(1)}), as well as
\cite[Eqn. 43:4:3]{Atlas}
\begin{equation*}
\Gamma \left( n+\frac{1}{2}\right) =\left( \frac{1}{2}\right) _{n}\sqrt{\pi }%
,
\end{equation*}%
to obtain%
\begin{eqnarray}
&&\int_{0}^{\infty }e^{-st}t^{n-1/2}\,_{1}F_{1}\left( \left.
\begin{array}{c}
1+n \\
2+n%
\end{array}%
\right\vert xt\right) dt  \label{Int_(1)} \\
&=&\frac{2\sqrt{\pi }\left( -1\right) ^{n}\left( n+1\right) !}{\,x^{n+1}}%
\left[ \sqrt{s}-\sqrt{s-x}\sum_{k=0}^{n}\frac{\left( -\frac{1}{2}\right) _{k}%
}{k!}\left( \frac{x}{x-s}\right) ^{k}\right] .  \notag
\end{eqnarray}%
However, according to \cite[Eqn. 7.11.1(13)]{Prudnikov3}, we have
\begin{equation}
_{1}F_{1}\left( \left.
\begin{array}{c}
n \\
1+n%
\end{array}%
\right\vert z\right) =\frac{\left( -1\right) ^{n}n!}{z^{n}}\left[
1-e^{-z}\sum_{k=0}^{n-1}\frac{\left( -1\right) ^{k}z^{k}}{k!}\right] ,
\label{1F1_Prudnikov}
\end{equation}%
and \cite[Eqns. 45:4:2\&26:12:2]{Atlas}, we have as well%
\begin{equation}
\Gamma \left( n,z\right) =\left( n-1\right) !e^{-z}e_{n-1}\left( z\right)
=\left( n-1\right) !e^{-z}\sum_{k=0}^{n-1}\frac{z^{k}}{k!},
\label{Gamma(n,z)}
\end{equation}%
where $\Gamma \left( \nu ,z\right) $ denotes the \textit{upper incomplete
gamma function} and $e_{n}\left( z\right) $ is the \textit{exponential
polynomial}. Therefore, from (\ref{1F1_Prudnikov})\ and (\ref{Gamma(n,z)}),
and taking into account that the\textit{\ lower incomplete gamma function}
satisfies \cite[Eqn. 45:0:1]{Atlas}
\begin{equation*}
\gamma \left( \nu ,z\right) =\Gamma \left( \nu \right) -\Gamma \left( \nu
,z\right) ,
\end{equation*}%
we conclude that%
\begin{equation}
_{1}F_{1}\left( \left.
\begin{array}{c}
n \\
1+n%
\end{array}%
\right\vert z\right) =n\left( -z\right) ^{-n}\gamma \left( n,-z\right) ,
\label{1F1_resultado_(1)}
\end{equation}%
hence, inserting (\ref{1F1_resultado_(1)})\ in (\ref{Int_(1)}), we arrive at
(\ref{Int_1_resultado}), as we wanted to prove.
\end{proof}

It is worth noting that we can obtain also (\ref{Int_1_resultado})\ from
\cite[Eqn. 2.10.3(2)]{Prudnikov2} and (\ref{2F1_resultado_(1)}).

\subsection{Second differentiation formula}

\begin{definition}[Regularized hypergeometric function]
\begin{equation}
_{p}\tilde{F}_{q}\left( \left.
\begin{array}{c}
a_{1},\ldots ,a_{p} \\
b_{1},\ldots ,b_{q}%
\end{array}%
\right\vert z\right) = \sum_{k=0}^{\infty }\frac{\left( a_{1}\right)
_{k}\cdots \left( a_{p}\right) _{k}}{\Gamma \left( b_{1} + k \right) \cdots
\Gamma \left( b_{q} + k \right)}\frac{z^{k}}{k!}.
\label{Hyper_Normalized_Generalized_def}
\end{equation}%
When $p\leq q + 1$ and $z$ is fixed and not a branch point, (\ref%
{Hyper_Normalized_Generalized_def})\ is an entire function of each of the
parameters $a_{1},\ldots ,a_{p},b_{1},\ldots ,b_{q}$ (see \cite[Eqn. 15.2.2]%
{DLMF}).
\end{definition}

\begin{theorem}
For $n=0,1,2,\ldots $ and $t\in
\mathbb{C}
\backslash \left\{ 1\right\} $,%
\begin{equation}
_{2}\tilde{F}_{1}\left( \left.
\begin{array}{c}
\frac{1}{2},1 \\
1-n%
\end{array}%
\right\vert t\right) =\frac{\left( \frac{1}{2}\right) _{n}}{\sqrt{1-t}}%
\left( \frac{t}{1-t}\right) ^{n}.  \label{2F1_resultado_(2)}
\end{equation}
\end{theorem}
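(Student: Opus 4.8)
The plan is to obtain (\ref{2F1_resultado_(2)}) from the ``second'' differentiation formula for the Gauss hypergeometric function \cite[Eqn. 15.5.4]{DLMF}, written here in terms of the regularized function (\ref{Hyper_Normalized_Generalized_def}):
\begin{equation*}
\frac{d^{n}}{dt^{n}}\left[ t^{c-1}\,{}_{2}\tilde{F}_{1}\!\left( \left.
\begin{array}{c}
a,b \\
c
\end{array}
\right\vert t\right) \right] =t^{c-n-1}\,{}_{2}\tilde{F}_{1}\!\left( \left.
\begin{array}{c}
a,b \\
c-n
\end{array}
\right\vert t\right) ,\qquad n=0,1,2,\ldots
\end{equation*}
This is the $\Gamma(c)$-normalised form of $\frac{d^{n}}{dt^{n}}[t^{c-1}\,{}_{2}F_{1}(a,b;c;t)]=(c-n)_{n}\,t^{c-n-1}\,{}_{2}F_{1}(a,b;c-n;t)$ and holds for every complex $c$; using the regularized function is precisely what makes it usable here, since $1-n$ is a nonpositive integer once $n\geq1$.

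First I would take $c=1$, $a=\frac{1}{2}$, $b=1$. A numerator and a denominator parameter then cancel, so ${}_{2}\tilde{F}_{1}(\frac{1}{2},1;1;t)={}_{2}F_{1}(\frac{1}{2},1;1;t)=\sum_{k\geq0}(\frac{1}{2})_{k}\,t^{k}/k!=(1-t)^{-1/2}$ by the binomial series (\ref{Binomial}). With $c-1=0$ and $c-n-1=-n$, the displayed formula becomes
\begin{equation*}
\frac{d^{n}}{dt^{n}}\left( \frac{1}{\sqrt{1-t}}\right) =t^{-n}\,{}_{2}\tilde{F}_{1}\!\left( \left.
\begin{array}{c}
\frac{1}{2},1 \\
1-n
\end{array}
\right\vert t\right) .
\end{equation*}
Now I would insert (\ref{Dn(1/sqrt)}), namely $\frac{d^{n}}{dt^{n}}(1-t)^{-1/2}=(\frac{1}{2})_{n}(1-t)^{-1/2-n}$, and solve for the regularized function to get
\begin{equation*}
{}_{2}\tilde{F}_{1}\!\left( \left.
\begin{array}{c}
\frac{1}{2},1 \\
1-n
\end{array}
\right\vert t\right) =t^{n}\left( \frac{1}{2}\right) _{n}(1-t)^{-1/2-n}=\frac{\left( \frac{1}{2}\right) _{n}}{\sqrt{1-t}}\left( \frac{t}{1-t}\right) ^{n},
\end{equation*}
which is (\ref{2F1_resultado_(2)}).

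I do not expect a real obstacle. The only point to watch is the spurious factor $t^{-n}$: after multiplying through by $t^{n}$, both sides of (\ref{2F1_resultado_(2)}) are analytic in a neighbourhood of $t=0$, and the identity then propagates to the stated set $t\in\mathbb{C}\setminus\{1\}$. Should one prefer to avoid the differentiation formula altogether, the identity also follows by a direct series manipulation: expanding the left-hand side of (\ref{2F1_resultado_(2)}) via (\ref{Hyper_Normalized_Generalized_def}), the terms with $0\leq k\leq n-1$ drop out because $1/\Gamma(1-n+k)=0$ there, so the sum starts at $k=n$; writing $k=n+j$ and using $(\frac{1}{2})_{n+j}=(\frac{1}{2})_{n}(\frac{1}{2}+n)_{j}$ together with the binomial series (\ref{Binomial}) with exponent $\frac{1}{2}+n$ reproduces $t^{n}(\frac{1}{2})_{n}(1-t)^{-1/2-n}$, i.e. the right-hand side.
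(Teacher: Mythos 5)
Your proof is correct. It rests on the same key tool as the paper's proof, namely the denominator-lowering differentiation formula \cite[Eqn.~15.5.4]{DLMF} together with the closed form (\ref{Dn(1/sqrt)}) for $\frac{d^{n}}{dt^{n}}(1-t)^{-1/2}$, but you deploy it from a different starting point: you take $c=1$ and the binomial series ${}_{2}F_{1}\!\left(\frac{1}{2},1;1;t\right)=(1-t)^{-1/2}$ and differentiate $n$ times, whereas the paper takes $c=2$ and its central identity (\ref{2F1_resultado}), ${}_{2}F_{1}\!\left(\frac{1}{2},1;2;t\right)=\frac{2}{t}\left(1-\sqrt{1-t}\right)$, differentiates $n+1$ times, and uses $(1-n)_{n+1}=1/\Gamma(1-n)$ together with (\ref{Dn(sqrt(1-t))}) and (\ref{(x)_n+1}) to pass to the regularized function. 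The two computations are equivalent; yours is marginally cleaner (no off-by-one in the order of differentiation), though it bypasses the identity (\ref{2F1_resultado}) that the paper is deliberately using as the common source of all the results in this section. Your alternative direct series argument --- the terms with $k\leq n-1$ vanish because $1/\Gamma(1-n+k)=0$ there, then reindex $k=n+j$ and use $\left(\frac{1}{2}\right)_{n+j}=\left(\frac{1}{2}\right)_{n}\left(\frac{1}{2}+n\right)_{j}$ with (\ref{Binomial}) --- is also correct and is in fact the most elementary route to (\ref{2F1_resultado_(2)}).
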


\begin{proof}
In \cite[Eqn. 15.5.4]{DLMF}, we found the differentiation formula:%
\begin{eqnarray}
&&\frac{d^{n}}{dt^{n}}\left[ t^{c-1}\,_{2}F_{1}\left( \left.
\begin{array}{c}
a,b \\
c%
\end{array}%
\right\vert t\right) \right] =\left( c-n\right)
_{n}\,t^{c-n-1}\,_{2}F_{1}\left( \left.
\begin{array}{c}
a,b \\
c-n%
\end{array}%
\right\vert t\right) ,  \label{Dn_(2)} \\
&&n=0,1,2,\ldots  \notag
\end{eqnarray}%
thus taking $a=\frac{1}{2}$, $b=1$ and $c=2$ in (\ref{Dn_(2)})\ and
considering (\ref{2F1_resultado}), we have%
\begin{equation}
2\frac{d^{n+1}}{dt^{n+1}}\left[ 1-\sqrt{1-t}\right] =\frac{1}{\Gamma \left(
1-n\right) t^{n}}\,_{2}F_{1}\left( \left.
\begin{array}{c}
\frac{1}{2},1 \\
1-n%
\end{array}%
\right\vert t\right) .  \label{Eqn_(2)_1}
\end{equation}%
Apply (\ref{Dn(sqrt(1-t))})-(\ref{(x)_n+1}),\ and the definition of the
\textit{regularized hypergeometric function} given in (\ref{Hyper_Normalized_Generalized_def}) in order to rewrite (\ref%
{Eqn_(2)_1}) as (\ref{2F1_resultado_(2)}), as we wanted to prove.
\end{proof}

\begin{remark}
According to (\ref{Gauss_sum}), note that for $t=1$, both sides of (\ref%
{2F1_resultado_(2)})\ are divergent.
\end{remark}

\subsection{Third differentiation formula}

\begin{theorem}
For $n=0,1,2,\ldots $ and $t\in
\mathbb{C}
$, the following reduction formula holds true:\
\begin{eqnarray}
&&_{2}F_{1}\left( \left.
\begin{array}{c}
\frac{1}{2},1 \\
2+n%
\end{array}%
\right\vert t\right)  \label{2F1_resultado_(3)} \\
&=&\left\{
\begin{array}{ll}
\displaystyle%
\frac{2\left( n+1\right) !}{\left( \frac{3}{2}\right) _{n}\,\sqrt{1-t}}%
\left( \frac{t-1}{t}\right) ^{n+1}\left[ 1-\frac{1}{\sqrt{1-t}}\sum_{k=0}^{n}%
\frac{\left( \frac{1}{2}\right) _{k}}{k!}\left( \frac{t}{t-1}\right) ^{k}%
\right] , & t\neq 0,1, \\
\displaystyle%
\frac{2\left( n+1\right) }{2n+1}, & t=1, \\
1, & t=0.%
\end{array}%
\right.  \notag
\end{eqnarray}
\end{theorem}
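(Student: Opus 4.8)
The plan is to follow the template of the two preceding subsections: apply a differentiation formula for $_{2}F_{1}$ that raises the lower parameter, evaluate it at $a=\tfrac12$, $b=1$, $c=2$, and substitute the closed form (\ref{2F1_resultado}). The relevant identity is \cite[Eqn. 15.5.6]{DLMF},
\begin{equation*}
\frac{d^{n}}{dt^{n}}\left[ \left( 1-t\right) ^{a+b-c}\,_{2}F_{1}\left( \left. \begin{array}{c} a,b \\ c \end{array} \right\vert t\right) \right] =\frac{\left( c-a\right) _{n}\left( c-b\right) _{n}}{\left( c\right) _{n}}\left( 1-t\right) ^{a+b-c-n}\,_{2}F_{1}\left( \left. \begin{array}{c} a,b \\ c+n \end{array} \right\vert t\right) .
\end{equation*}
With $a=\tfrac12$, $b=1$, $c=2$ we get $a+b-c=-\tfrac12$, $\left( c-a\right) _{n}=\left( \tfrac32\right) _{n}$, $\left( c-b\right) _{n}=n!$, $\left( c\right) _{n}=(n+1)!$, so the constant on the right is $\left( \tfrac32\right) _{n}/(n+1)$; and since $\left( 1-t\right) ^{-1/2}\bigl( 1-\sqrt{1-t}\bigr) =\left( 1-t\right) ^{-1/2}-1$, formula (\ref{2F1_resultado}) turns the left-hand side into $\frac{d^{n}}{dt^{n}}\bigl[ \frac{2}{t}\bigl( \left( 1-t\right) ^{-1/2}-1\bigr) \bigr]$.

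Next I would evaluate this $n$-th derivative for $t\neq 0,1$ by writing it as $2\,\frac{d^{n}}{dt^{n}}\bigl[ \left( 1-t\right) ^{-1/2}t^{-1}\bigr] -2\,\frac{d^{n}}{dt^{n}}\bigl[ t^{-1}\bigr]$, expanding the first term by Leibniz's rule (\ref{Leibniz_formula}) with (\ref{Dn(1/sqrt)}) for $\frac{d^{k}}{dt^{k}}\left( 1-t\right) ^{-1/2}=\left( \tfrac12\right) _{k}\left( 1-t\right) ^{-1/2-k}$ and (\ref{Dn(1/t)}) for $\frac{d^{n-k}}{dt^{n-k}}t^{-1}$. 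After extracting $n!/t^{n+1}$ and $\left( 1-t\right) ^{-1/2}$ and using $\binom{n}{k}(n-k)!=n!/k!$ together with $(-1)^{n-k}t^{k}\left( 1-t\right) ^{-k}=(-1)^{n}\bigl( t/(t-1)\bigr) ^{k}$, one is led to
\begin{equation*}
\frac{d^{n}}{dt^{n}}\left[ \frac{2\bigl( \left( 1-t\right) ^{-1/2}-1\bigr) }{t}\right] =\frac{2\left( -1\right) ^{n}n!}{t^{n+1}}\left[ \left( 1-t\right) ^{-1/2}\sum_{k=0}^{n}\frac{\left( \tfrac12\right) _{k}}{k!}\left( \frac{t}{t-1}\right) ^{k}-1\right] .
\end{equation*}
Equating this with $\frac{\left( 3/2\right) _{n}}{n+1}\left( 1-t\right) ^{-1/2-n}\,_{2}F_{1}\!\left( \tfrac12,1;2+n;t\right) $, solving for the hypergeometric function, and absorbing the overall sign by means of $\left( -1\right) ^{n}\left( 1-t\right) ^{1/2+n}/t^{n+1}=-\left( 1-t\right) ^{-1/2}\bigl( (t-1)/t\bigr) ^{n+1}$, one obtains exactly the first case of (\ref{2F1_resultado_(3)}).

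For $t=1$ I would invoke Gauss's summation formula (\ref{Gauss_sum}) with $a=\tfrac12$, $b=1$, $c=2+n$, which is admissible since $\mathrm{Re}\left( c-a-b\right) =n+\tfrac12>0$ for all $n\geq 0$; it gives $\Gamma(n+2)\,\Gamma\!\left( n+\tfrac12\right) /\bigl[ \Gamma\!\left( n+\tfrac32\right) \Gamma(n+1)\bigr] $, which collapses to $2(n+1)/(2n+1)$ upon using $\Gamma\!\left( n+\tfrac12\right) =\left( \tfrac12\right) _{n}\sqrt{\pi }$, $\Gamma\!\left( n+\tfrac32\right) =\left( \tfrac32\right) _{n}\Gamma\!\left( \tfrac32\right) $, and $\left( \tfrac12\right) _{n}/\left( \tfrac32\right) _{n}=1/(2n+1)$. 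For $t=0$ the value $1$ is immediate from (\ref{Hyper_z=1}); to confirm that the first-case expression is consistent there (it is of the indeterminate form $\infty \cdot 0$) I would note that, by (\ref{Binomial}) applied with argument $t/(t-1)$ — so that $\left( 1-t\right) ^{1/2}=\sum_{k\geq 0}\frac{\left( 1/2\right) _{k}}{k!}\bigl( t/(t-1)\bigr) ^{k}$ because $1-t/(t-1)=1/(1-t)$ — the bracket in (\ref{2F1_resultado_(3)}) equals $\frac{1}{\sqrt{1-t}}\sum_{k=n+1}^{\infty }\frac{\left( 1/2\right) _{k}}{k!}\bigl( t/(t-1)\bigr) ^{k}$, whose product with the remaining factors tends to $2\left( 1/2\right) _{n+1}/\left( 3/2\right) _{n}=1$ as $t\to 0$ by (\ref{(x)_n+1}), exactly as in the $t=0$ arguments of the preceding corollary and theorem.

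The step I expect to be the main obstacle is purely bookkeeping: carrying the powers of $t$, $1-t$, and $t-1$, together with the signs $(-1)^{k}$, correctly through the Leibniz expansion so that the result lands in the precise form displayed in (\ref{2F1_resultado_(3)}); once the raising-parameter differentiation formula has been chosen there is no conceptual difficulty.
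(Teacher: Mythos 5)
Your proposal is correct and follows essentially the same route as the paper: the raising-parameter differentiation formula [DLMF 15.5.6] with $a=\tfrac12$, $b=1$, $c=2$ applied to (\ref{2F1_resultado}), a Leibniz expansion via (\ref{Dn(1/t)}) and (\ref{Dn(1/sqrt)}) for $t\neq 0,1$, Gauss's sum (\ref{Gauss_sum}) for $t=1$, and the series value (\ref{Hyper_z=1}) for $t=0$. The sign and power bookkeeping you carry out checks out, and your extra verification that the $t\neq 0,1$ expression limits to $1$ as $t\to 0$ is a harmless addition beyond what the paper records.
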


\begin{proof}
For the case $t\neq 0,1$, set $a=\frac{1}{2}$, $b=1$ and $c=2$ in the
differentiation formula \cite[Eqn. 15.5.6]{DLMF},
\begin{eqnarray}
&&\frac{d^{n}}{dt^{n}}\left[ \left( 1-t\right) ^{a+b-c}\,_{2}F_{1}\left(
\left.
\begin{array}{c}
a,b \\
c%
\end{array}%
\right\vert t\right) \right]  \label{Dn_(3)} \\
&=&\frac{\left( c-a\right) _{n}\left( c-b\right) _{n}}{\left( c\right) _{n}}%
\left( 1-t\right) ^{a+b-c-n}\,_{2}F_{1}\left( \left.
\begin{array}{c}
a,b \\
c+n%
\end{array}%
\right\vert t\right) ,  \notag \\
&&n=0,1,2,\ldots  \notag
\end{eqnarray}%
and use the result (\ref{2F1_resultado}) to arrive at%
\begin{equation*}
2\frac{d^{n}}{dt^{n}}\left[ \frac{1}{t}\frac{1}{\sqrt{1-t}}-\frac{1}{t}%
\right] =\frac{\left( \frac{3}{2}\right) _{n}}{\left( n+1\right) \left(
1-t\right) ^{n+1/2}}\,_{2}F_{1}\left( \left.
\begin{array}{c}
\frac{1}{2},1 \\
2+n%
\end{array}%
\right\vert t\right) .
\end{equation*}%
Apply now Leibniz's differentiation formula (\ref{Leibniz_formula})\ and the
differentiation formulas (\ref{Dn(1/t)})\ and (\ref{Dn(1/sqrt)}). After some
algebra, we obtain (\ref{2F1_resultado_(3)}), as we wanted to prove.

For $t=1$, apply Gauss summation formula (\ref{Gauss_sum}), to obtain%
\begin{equation}
_{2}F_{1}\left( \left.
\begin{array}{c}
\frac{1}{2},1 \\
2+n%
\end{array}%
\right\vert 1\right) =\frac{2\left( n+1\right) }{2n+1},  \label{Gauss_2}
\end{equation}%
where (\ref{Gauss_2})\ only converges for $\mathrm{Re}\,\left( \frac{1}{2}%
+n\right) >0$, i.e. for $n=0,1,\ldots $, as we wanted to prove.

Finally, according to (\ref{Hyper_Generalized_def}), for $t=0$ we have%
\begin{equation*}
_{2}F_{1}\left( \left.
\begin{array}{c}
\frac{1}{2},1 \\
2+n%
\end{array}%
\right\vert 0\right) =1,
\end{equation*}%
as we wanted to prove.
\end{proof}

It is worth noting that we can provide other elementary representations for $%
_{2}F_{1}\left( 1/2,1;2+n;t\right) $, by using known formulas given in the
literature.

\begin{theorem}
For $n=0,1,2,\ldots $ and $t\in
\mathbb{C}
$, the following reduction formula holds true:\
\begin{eqnarray}
&&_{2}F_{1}\left( \left.
\begin{array}{c}
\frac{1}{2},1 \\
2+n%
\end{array}%
\right\vert t\right)   \label{2F1_resultado_(3)b} \\
&=&\left\{
\begin{array}{ll}
\begin{array}{l}
\displaystyle%
\frac{2\left( n+1\right) !}{\left( \frac{3}{2}\right) _{n}\,} \\
\displaystyle%
\left[ \frac{2}{\sqrt{1-t}}\left( \frac{t-1}{t}\right) ^{n+1}+\frac{1}{1-%
\sqrt{1-t}}\sum_{k=0}^{n}\frac{\left( n+1\right) _{k}}{k!\,2^{k+n}}\left( 1-%
\frac{1}{\sqrt{1-t}}\right) ^{k-n}\right] ,%
\end{array}
& t\neq 0,1, \\
\displaystyle%
\frac{2\left( n+1\right) }{2n+1}, & t=1, \\
1, & t=0.%
\end{array}%
\right.   \notag
\end{eqnarray}
\end{theorem}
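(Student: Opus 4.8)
The plan is to bring $_{2}F_{1}\left(\left.\begin{array}{c}\frac{1}{2},1\\2+n\end{array}\right\vert t\right)$ to the shape $_{2}F_{1}\left(\left.\begin{array}{c}1,b\\m\end{array}\right\vert z\right)$ handled by (\ref{Formula_Prudnikov}) through two standard transformations of variable, and then to recast the outcome in the form (\ref{2F1_resultado_(3)b}); this yields a second elementary representation, equal to but different in appearance from (\ref{2F1_resultado_(3)}).

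First, I would apply Pfaff's transformation \cite[Eqn. 15.8.1]{DLMF} with $a=\frac{1}{2}$, $b=1$ and $c=2+n$, obtaining
\begin{equation*}
_{2}F_{1}\left( \left.\begin{array}{c}\frac{1}{2},1\\2+n\end{array}\right\vert t\right)=\left( 1-t\right) ^{-1/2}\,_{2}F_{1}\left( \left.\begin{array}{c}\frac{1}{2},n+1\\n+2\end{array}\right\vert \frac{t}{t-1}\right) .
\end{equation*}
The parameters on the right satisfy $n+2=\left( n+1\right) +\frac{1}{2}+\frac{1}{2}$, so I would next use the quadratic transformation \cite[Eqn. 15.8.15]{DLMF}
\begin{equation*}
_{2}F_{1}\left( \left.\begin{array}{c}a,b\\a+b+\frac{1}{2}\end{array}\right\vert z\right)=\,_{2}F_{1}\left( \left.\begin{array}{c}2a,2b\\a+b+\frac{1}{2}\end{array}\right\vert \frac{1}{2}-\frac{1}{2}\sqrt{1-z}\right)
\end{equation*}
with $a=n+1$, $b=\frac{1}{2}$ and $z=t/\left( t-1\right) $. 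Since $\sqrt{1-t/\left( t-1\right) }=\left( 1-t\right) ^{-1/2}$, this gives
\begin{equation*}
_{2}F_{1}\left( \left.\begin{array}{c}\frac{1}{2},1\\2+n\end{array}\right\vert t\right)=\left( 1-t\right) ^{-1/2}\,_{2}F_{1}\left( \left.\begin{array}{c}2n+2,1\\n+2\end{array}\right\vert w\right) ,\qquad w=\frac{1}{2}-\frac{1}{2\sqrt{1-t}}.
\end{equation*}

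Next, I would feed the inner function, rewritten as $_{2}F_{1}\left( 1,2n+2;n+2;w\right) $, into (\ref{Formula_Prudnikov}) with $m=n+2$ and $b=2n+2$ (the side condition $m-b=-n\notin \left\{ 1,2,\ldots \right\} $ holds), which yields
\begin{equation*}
_{2}F_{1}\left( \left.\begin{array}{c}2n+2,1\\n+2\end{array}\right\vert w\right)=\frac{\left( n+1\right) !\left( -w\right) ^{-\left( n+1\right) }}{\left( -2n-1\right) _{n+1}}\left[ \left( 1-w\right) ^{-\left( n+1\right) }-\sum_{k=0}^{n}\frac{\left( n+1\right) _{k}}{k!}w^{k}\right] .
\end{equation*}
The rest is bookkeeping: from $-w=\frac{1-\sqrt{1-t}}{2\sqrt{1-t}}$ and $1-w=\frac{1+\sqrt{1-t}}{2\sqrt{1-t}}$ one gets $\left( -w\right) ^{-\left( n+1\right) }\left( 1-w\right) ^{-\left( n+1\right) }=\left( 4\left( 1-t\right) /t\right) ^{n+1}$ and $w^{k}=2^{-k}\left( 1-\left( 1-t\right) ^{-1/2}\right) ^{k}$, while $\left( -2n-1\right) _{n+1}=\left( -1\right) ^{n+1}\left( 2n+1\right) !/n!$ together with $\left( \frac{3}{2}\right) _{n}=\left( 2n+1\right) !/\left( 4^{n}n!\right) $ turns the prefactor into $2\left( n+1\right) !/\left( \frac{3}{2}\right) _{n}$, the leftover powers of $2$ being absorbed into the factor $2^{k+n}$ of (\ref{2F1_resultado_(3)b}); this reproduces (\ref{2F1_resultado_(3)b}) for $t\neq 0,1$. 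For $t=1$ I would instead invoke Gauss's summation (\ref{Gauss_sum}), giving $2\left( n+1\right) /\left( 2n+1\right) $ exactly as in (\ref{Gauss_2}) (equivalently, as $t\rightarrow 1$ only the $k=n$ term of the closed form survives and gives the same value); and for $t=0$ the left side is $1$ by (\ref{Hyper_z=1}) while the right side is an indeterminate $\infty -\infty $, so, as in the earlier proofs, I would expand $\left( 1-w\right) ^{-\left( n+1\right) }$ by the binomial series (\ref{Binomial}), cancel its first $n+1$ terms against $\sum_{k=0}^{n}\frac{\left( n+1\right) _{k}}{k!}w^{k}$, and check that the surviving tail, times $\left( -w\right) ^{-\left( n+1\right) }$, tends to $1$ as $w\rightarrow 0$.

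The hardest part will be this last bookkeeping step, where one must simultaneously track the branch of $\sqrt{1-t}$, the difference between $\sqrt{1-t}$ and $1-t$, the sign $\left( -1\right) ^{n+1}$ and the scattered factors of $2$, so that the finite sum comes out with precisely the normalization $\frac{\left( n+1\right) _{k}}{k!\,2^{k+n}}\left( 1-\left( 1-t\right) ^{-1/2}\right) ^{k-n}$ appearing in (\ref{2F1_resultado_(3)b}); one should also verify that $t$ lies in the region where Pfaff's and the quadratic transformation are valid and handle the removable singularity of the right-hand side at $t=0$.
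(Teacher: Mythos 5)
Your proposal is correct, and I checked the bookkeeping: the prefactor does come out as $\frac{2(n+1)!}{\left(\frac{3}{2}\right)_{n}}$ via $(-2n-1)_{n+1}=(-1)^{n+1}(2n+1)!/n!$ and $\left(\frac{3}{2}\right)_{n}=(2n+1)!/(4^{n}n!)$, and the sum acquires exactly the normalization $\frac{(n+1)_{k}}{k!\,2^{k+n}}\left(1-(1-t)^{-1/2}\right)^{k-n}$ with the outer factor $\frac{1}{1-\sqrt{1-t}}$. Your route is, however, not the paper's. The paper invokes Vidunas's quadratic (dihedral) transformation to write $_{2}F_{1}\left(\frac{1}{2},1;2+n;t\right)=\frac{2}{1+\sqrt{1-t}}\,_{2}F_{1}\left(-n,1;2+n;\frac{1-\sqrt{1-t}}{1+\sqrt{1-t}}\right)$ and then applies a \emph{different} Prudnikov entry, 7.3.1(179), which reduces the terminating series $_{2}F_{1}(-n,1;m;z)$; you instead compose Pfaff with the quadratic transformation \cite[Eqn. 15.8.15]{DLMF} to land on the non-terminating $_{2}F_{1}(2n+2,1;n+2;w)$ with $w=\frac{1}{2}\left(1-(1-t)^{-1/2}\right)$ and then reuse (\ref{Formula_Prudnikov}), which the paper had already introduced for (\ref{2F1_resultado_(1)b}). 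The two inner arguments are related by $w=z/(z-1)$ with $z=\frac{1-\sqrt{1-t}}{1+\sqrt{1-t}}$, so your chain is the paper's chain followed by one extra Pfaff step applied to the inner function; what it buys is economy of references (no Vidunas, no second Prudnikov formula), at the price of working with a non-terminating $_{2}F_{1}$ whose elementary reduction is slightly less transparent. You also spell out the $t=0$ and $t=1$ cases (Gauss summation and the binomial-tail limit), which the paper's proof of this particular theorem leaves implicit, handling only $t\neq 0,1$.
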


\begin{proof}
We need to prove (\ref{2F1_resultado_(3)b})\ for $t\neq 0,1$. In \cite%
{Vidunas}, we found
\begin{equation*}
_{2}F_{1}\left( \left.
\begin{array}{c}
\frac{\alpha }{2},\frac{\alpha +1}{2} \\
\alpha +n+1%
\end{array}%
\right\vert t\right) =\left( \frac{1+\sqrt{1-t}}{2}\right) ^{-\alpha
}\,_{2}F_{1}\left( \left.
\begin{array}{c}
-n,\alpha \\
\alpha +n+1%
\end{array}%
\right\vert \frac{1-\sqrt{1-t}}{1+\sqrt{1-t}}\right) ,
\end{equation*}%
hence for $\alpha =1$, we obtain%
\begin{equation*}
_{2}F_{1}\left( \left.
\begin{array}{c}
\frac{1}{2},1 \\
2+n%
\end{array}%
\right\vert t\right) =\frac{2}{1+\sqrt{1-t}}\,_{2}F_{1}\left( \left.
\begin{array}{c}
-n,1 \\
2+n%
\end{array}%
\right\vert \frac{1-\sqrt{1-t}}{1+\sqrt{1-t}}\right) .
\end{equation*}

Now apply \cite[Eqn. 7.3.1(179)]{Prudnikov3}%
\begin{equation*}
_{2}F_{1}\left( \left.
\begin{array}{c}
-n,1 \\
m%
\end{array}%
\right\vert z\right) =\frac{-n!}{\left( m\right) _{n}\,z}\left( \frac{z-1}{z}%
\right) ^{m-2}\left[ \left( 1-z\right) ^{n+1}-\sum_{k=0}^{m-2}\frac{\left(
n+1\right) _{k}}{k!}\left( \frac{z}{z-1}\right) ^{k}\right] ,
\end{equation*}%
taking $m=n+2$ and $z=\frac{1-\sqrt{1-t}}{1+\sqrt{1-t}}$. Knowing that $%
\left( n+2\right) _{n}=\frac{2^{2n}}{n+1}\left( \frac{3}{2}\right) _{n}$,
after some algebra, we arrive at (\ref{2F1_resultado_(3)b}) for $t\neq 0,1$,
as we wanted to prove.
\end{proof}

\begin{theorem}
For $n=0,1,2,\ldots $ and $t\in
\mathbb{C}
$, the following reduction formula holds true:\
\begin{eqnarray}
&&_{2}F_{1}\left( \left.
\begin{array}{c}
\frac{1}{2},1 \\
2+n%
\end{array}%
\right\vert t\right)  \label{2F1_resultado(3)c} \\
&=&\left\{
\begin{array}{ll}
\displaystyle%
\frac{2\left( n+1\right) !}{\left( \frac{3}{2}\right) _{n}\,\left( -t\right)
^{n+1}}\left[ \left( 1-t\right) ^{n+1/2}+\sum_{k=0}^{n}\frac{\left( -n-\frac{%
1}{2}\right) _{k}}{k!}t^{k}\right] , & t\neq 0,1, \\
\displaystyle%
\frac{2\left( n+1\right) }{2n+1}, & t=1, \\
1, & t=0.%
\end{array}%
\right.  \notag
\end{eqnarray}
\end{theorem}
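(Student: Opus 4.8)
The plan is to obtain (\ref{2F1_resultado(3)c}) directly from Prudnikov's reduction formula (\ref{Formula_Prudnikov}), in the same spirit as the proof of (\ref{2F1_resultado_(1)b}). First I would use the symmetry of the Gauss hypergeometric function in its two numerator parameters to rewrite the left-hand side of (\ref{2F1_resultado(3)c}) as ${}_{2}F_{1}(1,\frac{1}{2};2+n;t)$, bringing it into the shape ${}_{2}F_{1}(1,b;m;z)$ required by (\ref{Formula_Prudnikov}), and then substitute $m=n+2$, $b=\frac{1}{2}$, $z=t$. The hypotheses of (\ref{Formula_Prudnikov}) are satisfied for every $n=0,1,2,\ldots$: $m=n+2$ is a positive integer and $m-b=n+\frac{3}{2}$ is never a positive integer, so the factor $(1-b)_{m-1}=\left(\frac{1}{2}\right)_{n+1}$ occurring in the denominator does not vanish.

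With these parameters, (\ref{Formula_Prudnikov}) yields the prefactor $(n+1)!\,(-t)^{-(n+1)}/\left(\frac{1}{2}\right)_{n+1}$, the power $(1-z)^{m-b-1}=(1-t)^{n+1/2}$, and the terminating sum $\sum_{k=0}^{n}\frac{(b-m+1)_{k}}{k!}t^{k}=\sum_{k=0}^{n}\frac{\left(-n-\frac{1}{2}\right)_{k}}{k!}t^{k}$. The only further step for $t\neq 0,1$ is Pochhammer bookkeeping: by (\ref{(x)_n+1}) with $x=\frac{1}{2}$ one has $\left(\frac{1}{2}\right)_{n+1}=\frac{1}{2}\left(\frac{3}{2}\right)_{n}$, so the prefactor collapses to $2(n+1)!/\!\left[\left(\frac{3}{2}\right)_{n}(-t)^{n+1}\right]$, which is precisely the constant appearing in (\ref{2F1_resultado(3)c}); collecting terms then gives the stated closed form. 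As an internal check one may let $t\rightarrow 0$ on the right-hand side: the expansion of $(1-t)^{n+1/2}$ via (\ref{Binomial}) agrees with the polynomial sum up to and including order $t^{n}$, so after dividing by $(-t)^{n+1}$ the right-hand side tends to a finite limit which, once more by (\ref{(x)_n+1}), equals the value $1$ required by (\ref{Hyper_z=1}).

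The two remaining cases are immediate. For $t=1$ the value $2(n+1)/(2n+1)$ has already been recorded in (\ref{Gauss_2}) as an instance of Gauss's summation formula (\ref{Gauss_sum}), which is legitimate since $\mathrm{Re}\,(c-a-b)=n+\frac{1}{2}>0$; and for $t=0$ the value $1$ is read off directly from the defining series (\ref{Hyper_Generalized_def}). I do not expect any genuine obstacle here, since the argument amounts to a single application of a tabulated formula followed by elementary simplification; the only points that demand a little care are verifying the applicability conditions of (\ref{Formula_Prudnikov}) and keeping the branch of $(1-t)^{n+1/2}$ and the sign of $(-t)^{n+1}$ consistent with the two alternative representations (\ref{2F1_resultado_(3)}) and (\ref{2F1_resultado_(3)b}) of the same function established above.
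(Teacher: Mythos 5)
Your proof follows exactly the paper's route: a single application of (\ref{Formula_Prudnikov}) with $m=n+2$, $b=\tfrac{1}{2}$, followed by the Pochhammer identity (\ref{(x)_n+1}) at $x=\tfrac{1}{2}$, with the $t=0$ and $t=1$ cases disposed of as in the preceding theorems, so it is essentially the same argument. One remark: the direct substitution actually yields $(1-t)^{n+1/2}-\sum_{k=0}^{n}\frac{(-n-\frac{1}{2})_{k}}{k!}t^{k}$ with a minus sign --- as your own $t\to 0$ limit check implicitly requires, and as the case $n=0$ (where the bracket must reduce to $\sqrt{1-t}-1$ to recover (\ref{2F1_resultado})) confirms --- so the plus sign in the printed statement (\ref{2F1_resultado(3)c}) is a typo in the paper rather than a defect of your derivation.
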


\begin{proof}
We need to prove (\ref{2F1_resultado(3)c})\ for $t\neq 0,1$. For this
purpose, apply (\ref{Formula_Prudnikov})\ taking $m=n+2$ and $b=\frac{1}{2}$
and use (\ref{(x)_n+1})\ for $x=\frac{1}{2}$.
\end{proof}

\begin{corollary}
For $n=1,2,\ldots $ and $t\in
\mathbb{C}
\backslash \left\{ 1\right\} $, we have
\begin{eqnarray}
&&P_{-n}^{-n}\left( \frac{1}{\sqrt{1-t}}\right)  \label{P-n-n_resultado} \\
&=&\left\{
\begin{array}{ll}
\displaystyle%
\frac{1}{2^{n}\left( \frac{1}{2}\right) _{n}}\left( \frac{t-1}{t}\right)
^{n/2}\left[ 1-\frac{1}{\sqrt{1-t}}\sum_{k=0}^{n-1}\frac{\left( \frac{1}{2}%
\right) _{k}}{k!}\left( \frac{t}{t-1}\right) ^{k}\right] , & t\neq 0, \\
0, & t=0.%
\end{array}%
\right.  \notag
\end{eqnarray}%
where $P_{\nu }^{\mu }\left( z\right) $ denotes the \textit{Legendre function%
} \cite[Chap. III]{Ederlyi}.
\end{corollary}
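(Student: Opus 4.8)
The plan is to derive the corollary from the reduction formula $(\ref{2F1_resultado_(3)})$ together with the classical hypergeometric representation of the Legendre function recalled in the reference cited in the statement. The only non-elementary input is the observation that, after the substitution $z=(1-t)^{-1/2}$ (so that $z^{2}-1=t/(1-t)$ and $1-z^{2}=t/(t-1)$), the Legendre function $P_{-n}^{-n}(z)$ admits the representation
\[
P_{-n}^{-n}\!\left(\frac{1}{\sqrt{1-t}}\right)=\frac{\sqrt{1-t}}{2^{n}\,n!}\left(\frac{t}{t-1}\right)^{n/2}\,{}_{2}F_{1}\!\left(\left.\begin{array}{c}\frac12,\,1\\ n+1\end{array}\right|t\right),\qquad t\neq 0,1;
\]
this is a rewriting of one of the standard $_2F_1$-representations of $P_{\nu}^{\mu}$ (obtainable from the basic representation with argument $\tfrac12(1-z)$ by a quadratic/Pfaff-type transformation), and it already accounts for the factors $\sqrt{1-t}$ and $\left((t-1)/t\right)^{n/2}$ on the right-hand side of $(\ref{P-n-n_resultado})$.

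Granting this, I would substitute $(\ref{2F1_resultado_(3)})$ with $n$ replaced by $n-1$ — which is precisely why the corollary is stated for $n=1,2,\ldots$ and why the finite sum in $(\ref{P-n-n_resultado})$ terminates at $k=n-1$. That formula reads
\[
{}_{2}F_{1}\!\left(\left.\begin{array}{c}\frac12,\,1\\ n+1\end{array}\right|t\right)=\frac{2\,n!}{\left(\frac32\right)_{n-1}\sqrt{1-t}}\left(\frac{t-1}{t}\right)^{n}\left[1-\frac{1}{\sqrt{1-t}}\sum_{k=0}^{n-1}\frac{\left(\frac12\right)_{k}}{k!}\left(\frac{t}{t-1}\right)^{k}\right],\qquad t\neq 0,1,
\]
whose bracket is exactly the truncated sum appearing in $(\ref{P-n-n_resultado})$.

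What remains is bookkeeping of the prefactors. Multiplying, the powers collapse via $\left(t/(t-1)\right)^{n/2}\left((t-1)/t\right)^{n}=\left((t-1)/t\right)^{n/2}$, the factor $\sqrt{1-t}$ cancels the $1/\sqrt{1-t}$, and the numerical constant becomes $\dfrac{1}{2^{n}n!}\cdot\dfrac{2\,n!}{\left(\frac32\right)_{n-1}}=\dfrac{2}{2^{n}\left(\frac32\right)_{n-1}}=\dfrac{1}{2^{n}\left(\frac12\right)_{n}}$, where the last step uses $\left(\frac12\right)_{n}=\frac12\left(\frac32\right)_{n-1}$, a special case of $(\ref{(x)_n+1})$. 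This yields $(\ref{P-n-n_resultado})$ for $t\neq 0,1$. For $t=0$ the argument of the Legendre function is $z=(1-t)^{-1/2}=1$; in the representation above the factor $\left(t/(t-1)\right)^{n/2}$ vanishes at $t=0$ while the hypergeometric factor tends to $1$ by $(\ref{Hyper_z=1})$, so $P_{-n}^{-n}(1)=0$ for $n\geq1$ and both sides of $(\ref{P-n-n_resultado})$ equal $0$, as claimed.

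The main obstacle is not the algebra but the coherent handling of the fractional powers: $\left((t-1)/t\right)^{n/2}$, $\sqrt{1-t}$, and the fractional power in the Legendre representation must all be analytically continued along the same path so that the half-integer powers on the two sides agree throughout $\mathbb{C}\setminus\{1\}$, rather than only on a real interval where the signs are transparent. Once the branch of $z=(1-t)^{-1/2}$ is fixed and $\left((t-1)/t\right)^{n/2}$ is read as $(1-z^{2})^{-n/2}$ accordingly, the displayed substitution is immediate.
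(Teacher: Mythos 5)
Your proposal is correct and follows essentially the same route as the paper: your displayed representation of $P_{-n}^{-n}\bigl((1-t)^{-1/2}\bigr)$ is exactly the connection formula (\ref{2F1_Legendre}) specialized to $a=\tfrac12$, $c=n+1$, and the rest is the same substitution of (\ref{2F1_resultado_(3)}) (with the index shifted by one) plus the identity $\left(\tfrac12\right)_{n}=\tfrac12\left(\tfrac32\right)_{n-1}$ from (\ref{(x)_n+1}). The only cosmetic difference is at $t=0$, where the paper invokes the DLMF hypergeometric representation of $P_{\nu}^{-\mu}$ to get $P_{-n}^{-n}(1)=0$ while you read the same vanishing off your own representation; both are fine, and your closing remark on keeping the half-integer powers on a common branch is a point the paper leaves implicit.
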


\begin{proof}
For $t\neq 0,1$, we found, in \cite[Eqn. 7.3.1(101)]{Prudnikov3},%
\begin{equation}
_{2}F_{1}\left( \left.
\begin{array}{c}
a,a+\frac{1}{2} \\
c%
\end{array}%
\right\vert t\right) =2^{c-1}\Gamma \left( c\right) \left( -t\right)
^{\left( 1-c\right) /2}\left( 1-t\right) ^{\left( c-1\right)
/2-a}\,P_{2a-c}^{1-c}\left( \frac{1}{\sqrt{1-t}}\right) .
\label{2F1_Legendre}
\end{equation}%
Therefore, taking $a=\frac{1}{2}$ and $c=2+n$ in (\ref{2F1_Legendre}),\ and
considering (\ref{2F1_resultado_(3)}) and (\ref{(x)_n+1}), we eventually
arrive at (\ref{P-n-n_resultado}), as we wanted to prove.

For $t=0$, apply the hypergeometric representation of the Legendre function
\cite[Eqn. 14.3.15]{DLMF}
\begin{equation*}
P_{\nu }^{-\mu }\left( x\right) =2^{-\mu }\left( x^{2}-1\right) ^{\mu
/2}\,_{2}F_{1}\left( \left.
\begin{array}{c}
\mu -\nu ,\mu +\nu +1 \\
\mu +1%
\end{array}%
\right\vert \frac{1-x}{2}\right) ,
\end{equation*}%
to conclude that $P_{-n}^{-n}\left( 1\right) =0$, for $n=1,2,\ldots $
\end{proof}

\begin{corollary}
For $n=1,2,\ldots $ and $x,p\in
\mathbb{C}
$, we have%
\begin{eqnarray}
&&\int_{0}^{\infty }\frac{e^{-pt}}{t^{1/2+n}}\gamma \left( n,xt\right) dt
\label{Int_2_resultado} \\
&=&\left\{
\begin{array}{ll}
\displaystyle%
\frac{-\sqrt{\pi }\left( n-1\right) !\left( -p\right) ^{n-1}}{\left( \frac{1%
}{2}\right) _{n}}\left[ \sqrt{p}-\sqrt{p+x}\sum_{k=0}^{n-1}\frac{\left(
\frac{1}{2}\right) _{k}}{k!}\left( -\frac{x}{p}\right) ^{k}\right] , & p\neq
0, \\
\displaystyle%
\frac{2\sqrt{\pi }x^{n-1/2}}{2n-1}, & p=0.%
\end{array}%
\right.  \notag
\end{eqnarray}
\end{corollary}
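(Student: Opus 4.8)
The plan is to turn the integral into an instance of (\ref{Int_Andrews}) and then apply the reduction formula (\ref{2F1_resultado_(3)}) already established in this subsection. I will derive the identity under the convergence conditions $\mathrm{Re}(p+x)>0$ with $x,p\ne0$, the fully complex statement following by analytic continuation, as is done elsewhere in the paper. The starting point is the classical relation $\gamma(a,z)=a^{-1}z^{a}e^{-z}\,{}_1F_1(1;a+1\mid z)$ (which also follows from (\ref{1F1_resultado_(1)}) combined with Kummer's transformation for ${}_1F_1$); taking $a=n$ and $z=xt$ gives $\gamma(n,xt)=\frac{(xt)^{n}}{n}\,e^{-xt}\,{}_1F_1(1;n+1\mid xt)$.

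Substituting this into the left-hand side and merging the exponentials, the integral becomes $\frac{x^{n}}{n}\int_{0}^{\infty}e^{-(p+x)t}\,t^{-1/2}\,{}_1F_1(1;n+1\mid xt)\,dt$, to which (\ref{Int_Andrews}) applies with $\alpha=\tfrac12$ and $s$ replaced by $p+x$ (here $p=1\le q=1$). Using $\Gamma(\tfrac12)=\sqrt\pi$, this yields
\[
\int_{0}^{\infty}\frac{e^{-pt}}{t^{1/2+n}}\gamma(n,xt)\,dt=\frac{\sqrt\pi\,x^{n}}{n\sqrt{p+x}}\;{}_2F_1(\tfrac12,1;n+1\mid \tfrac{x}{p+x}).
\]

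Next I would insert (\ref{2F1_resultado_(3)}) with $n$ replaced by $n-1$ — legitimate since $n\ge1$ — and $t$ replaced by $w:=x/(p+x)$; note $w\ne0,1$ precisely because $x\ne0$ and $p\ne0$, so the $t\ne0,1$ branch of (\ref{2F1_resultado_(3)}) is the relevant one. With $1-w=p/(p+x)$, $(w-1)/w=-p/x$ and $w/(w-1)=-x/p$, the factor $1/\sqrt{1-w}$ cancels the $1/\sqrt{p+x}$ and $((w-1)/w)^{n}$ cancels the $x^{n}$, leaving
\[
\frac{2\sqrt\pi\,(-1)^{n}(n-1)!\,p^{\,n-1}}{(3/2)_{n-1}}\left[\sqrt p-\sqrt{p+x}\sum_{k=0}^{n-1}\frac{(1/2)_{k}}{k!}\left(-\frac{x}{p}\right)^{k}\right].
\]
It remains to observe that $(\tfrac12)_{n}=\tfrac12(\tfrac32)_{n-1}$ (a case of (\ref{(x)_n+1})), i.e. $2(\tfrac12)_{n}=(\tfrac32)_{n-1}$, and that $(-p)^{n-1}=-(-1)^{n}p^{n-1}$; these turn the prefactor into $-\sqrt\pi(n-1)!(-p)^{n-1}/(\tfrac12)_{n}$, which is the claimed right-hand side for $p\ne0$. (An equivalent route avoiding Kummer's transformation is to keep $\gamma(n,xt)=\frac{(xt)^{n}}{n}\,{}_1F_1(n;n+1\mid -xt)$, apply (\ref{Int_Andrews}) directly, then use Euler's transformation and (\ref{Formula_Prudnikov}) on the resulting $_2F_1$.)

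For $p=0$, I would argue directly: the substitution $u=xt$ gives $\int_{0}^{\infty}t^{-1/2-n}\gamma(n,xt)\,dt=x^{n-1/2}\int_{0}^{\infty}u^{-1/2-n}\gamma(n,u)\,du$, and one integration by parts (using $\frac{d}{du}\gamma(n,u)=u^{n-1}e^{-u}$, the boundary terms vanishing because $n\ge1$) reduces the remaining Mellin integral to $-\Gamma(\tfrac12)/(\tfrac12-n)=2\sqrt\pi/(2n-1)$, hence the value $2\sqrt\pi\,x^{n-1/2}/(2n-1)$; alternatively one checks that the $p\ne0$ formula tends to this as $p\to0$, a useful consistency test. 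The proof is essentially bookkeeping of Pochhammer symbols, powers and signs through the chain of substitutions; the one point not to be missed is the identity $2(\tfrac12)_{n}=(\tfrac32)_{n-1}$ that collapses the prefactor into the stated form.
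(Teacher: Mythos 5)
Your proof is correct and follows essentially the same route as the paper: the Kummer reduction of $\gamma(n,xt)$ to a ${}_1F_1(1;1+n\mid xt)$, the Laplace--transform formula (\ref{Int_Andrews}) with $a_1=1$, $\alpha=\tfrac12$, $b_1=1+n$, and then (\ref{2F1_resultado_(3)}) with $n\mapsto n-1$, collapsing the prefactor via $(\tfrac32)_{n-1}=2(\tfrac12)_n$. The only (harmless) divergence is at $p=0$, where you integrate by parts directly instead of taking the limit of the $p\neq0$ formula as the paper does; both give $2\sqrt{\pi}\,x^{n-1/2}/(2n-1)$.
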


\begin{proof}
For $p\neq 0$, take $a_{1}=1$, $\alpha =\frac{1}{2}$ and $b_{1}=1+n$ in (\ref%
{Int_Andrews}), consider the reduction formula of the Kummer function \cite[%
Eqn. 7.11.1(14)]{Prudnikov3}, i.e.%
\begin{equation*}
_{1}F_{1}\left( \left.
\begin{array}{c}
1 \\
1+n%
\end{array}%
\right\vert xt\right) =\frac{n\,e^{xt}}{\left( xt\right) ^{n}}\gamma \left(
n,xt\right) ,
\end{equation*}%
and apply the result given in (\ref{2F1_resultado_(3)}) and the property (%
\ref{(x)_n+1}), to arrive after some algebra at (\ref{Int_2_resultado}), as
we wanted to prove.

For $p=0$, rewrite the result obtained above as
\begin{eqnarray*}
&&\int_{0}^{\infty }\frac{e^{-pt}}{t^{1/2+n}}\gamma \left( n,xt\right) dt \\
&=&\frac{\sqrt{\pi }\left( n-1\right) !\left( -1\right) ^{n}}{\left( \frac{1%
}{2}\right) _{n}} \\
&&\left\{ p^{n-1/2}-\sqrt{p+x}\left[ \sum_{k=0}^{n-2}\frac{\left( \frac{1}{2}%
\right) _{k}}{k!}\left( -x\right) ^{k}p^{n-1-k}+\frac{\left( \frac{1}{2}%
\right) _{n-1}}{\left( n-1\right) !}\left( -x\right) ^{k}\right] \right\} ,
\end{eqnarray*}%
and take $p=0$, to obtain the desired result.
\end{proof}

It is worth noting that we can obtain (\ref{Int_2_resultado})\ from \cite[%
Eqn. 2.10.3(2)]{Prudnikov2} and (\ref{2F1_resultado_(3)}).

\subsection{Fourth differentiation formula}

\begin{theorem}
For $n=1,2,\ldots $ and $t\in
\mathbb{C}
$, we have%
\begin{equation}
_{2}\tilde{F}_{1}\left( \left.
\begin{array}{c}
\frac{1}{2}-n,1-n \\
2-n%
\end{array}%
\right\vert t\right) =\left\{
\begin{array}{ll}
\displaystyle%
2\left( \frac{1}{2}\right) _{n}\,t^{n-1}, & n\geq 1, \\
1, & n=1.%
\end{array}%
\right. ,  \label{2F1_regularized_resultado}
\end{equation}
\end{theorem}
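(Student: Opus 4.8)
The plan is to read off (\ref{2F1_regularized_resultado}) from the base identity (\ref{2F1_resultado}) by a single differentiation, using the formula of \cite[Eqn. 15.5.8]{DLMF} that lowers all three parameters of a Gauss hypergeometric function simultaneously,
\begin{equation*}
\frac{d^{n}}{dt^{n}}\left[ t^{c-1}\left( 1-t\right) ^{a+b-c}\,{}_{2}F_{1}\left( \left.\begin{array}{c}a,b\\c\end{array}\right\vert t\right) \right] =\left( c-n\right) _{n}\,t^{c-n-1}\left( 1-t\right) ^{a+b-c-n}\,{}_{2}F_{1}\left( \left.\begin{array}{c}a-n,b-n\\c-n\end{array}\right\vert t\right) .
\end{equation*}
I would take $a=\tfrac12$, $b=1$, $c=2$, so that $a+b-c=-\tfrac12$. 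By (\ref{2F1_resultado}) the bracket on the left collapses to an elementary function:
\begin{equation*}
t\left(1-t\right)^{-1/2}\,{}_{2}F_{1}\left(\left.\begin{array}{c}\tfrac12,1\\2\end{array}\right\vert t\right)=t\left(1-t\right)^{-1/2}\cdot\frac{2}{t}\left(1-\sqrt{1-t}\right)=\frac{2}{\sqrt{1-t}}-2 ,
\end{equation*}
whose $n$-th derivative, for $n\geq 1$, equals $2\left(\tfrac12\right)_{n}\left(1-t\right)^{-1/2-n}$ by (\ref{Dn(1/sqrt)}) (the constant $-2$ drops out because $n\geq1$).

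On the right-hand side, since $\left(c-n\right)_{n}=\Gamma(c)/\Gamma(c-n)$, the choice $c=2$ gives $\left(2-n\right)_{n}=1/\Gamma(2-n)$, and comparing term by term with the definition (\ref{Hyper_Normalized_Generalized_def}) of the regularized function yields
\begin{equation*}
\left(2-n\right)_{n}\,{}_{2}F_{1}\left(\left.\begin{array}{c}\tfrac12-n,1-n\\2-n\end{array}\right\vert t\right)={}_{2}\tilde{F}_{1}\left(\left.\begin{array}{c}\tfrac12-n,1-n\\2-n\end{array}\right\vert t\right) ,
\end{equation*}
an identity that remains meaningful precisely when $2-n$ is a nonpositive integer (i.e.\ for $n\geq 2$), which is exactly why (\ref{2F1_regularized_resultado}) is naturally phrased in terms of ${}_{2}\tilde{F}_{1}$. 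Equating the two sides of the differentiation formula, cancelling the common factor $\left(1-t\right)^{-1/2-n}$, and multiplying through by $t^{n-1}$ gives ${}_{2}\tilde{F}_{1}(\tfrac12-n,1-n;2-n;t)=2\left(\tfrac12\right)_{n}t^{n-1}$; for $n=1$ this reduces to $2\left(\tfrac12\right)_{1}=1$, which is the second case of the statement.

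I expect the only delicate point to be the legitimacy of the last cancellation when $2-n\leq 0$ and at the values $t\in\{0,1\}$, where the intermediate quantities are singular. This is disposed of by observing that \emph{both} sides of (\ref{2F1_regularized_resultado}) are polynomials in $t$: in the series (\ref{Hyper_Normalized_Generalized_def}) for ${}_{2}\tilde{F}_{1}$ the factor $(1-n)_{k}$ vanishes for every $k\geq n$, so the sum terminates, and hence the identity, once established on $\mathbb{C}\setminus\{1\}$, extends to all of $\mathbb{C}$ by continuity. Alternatively—and this would make the argument entirely self-contained—one can obtain the whole statement directly from (\ref{Hyper_Normalized_Generalized_def}) with no external input: for $n\geq 2$ the factor $1/\Gamma(2-n+k)$ annihilates the terms with $k\leq n-2$ while $(1-n)_{k}$ annihilates those with $k\geq n$, so only $k=n-1$ survives, and evaluating $(\tfrac12-n)_{n-1}(1-n)_{n-1}/(n-1)!$ by means of (\ref{(x)_n+1}) produces $2\left(\tfrac12\right)_{n}$; for $n=1$ the factor $(0)_{k}$ leaves only $k=0$, whose value is $1$.
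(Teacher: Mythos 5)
Your argument is correct and follows essentially the same route as the paper: it applies the differentiation formula (\ref{Dn_(4)}) (DLMF 15.5.9 — your citation of 15.5.8 is a slip, though you write the right formula) with $a=\tfrac12$, $b=1$, $c=2$ to the base identity (\ref{2F1_resultado}), evaluates the resulting derivative via (\ref{Dn(1/sqrt)}), and absorbs the factor $1/\Gamma(2-n)$ into the regularized function. Your closing observation — that for $n\geq 2$ the series (\ref{Hyper_Normalized_Generalized_def}) terminates to the single term $k=n-1$, giving a direct and entirely self-contained verification valid for all $t\in\mathbb{C}$ — is a sound bonus check that the paper does not include and that neatly disposes of the singular points $t=0,1$.
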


\begin{proof}
Set $a=\frac{1}{2}$, $b=1$ and $c=2$ in the differentiation formula \cite[%
Eqn. 15.5.9]{DLMF},%
\begin{eqnarray}
&&\frac{d^{n}}{dt^{n}}\left[ t^{c-1}\left( 1-t\right)
^{a+b-c}\,_{2}F_{1}\left( \left.
\begin{array}{c}
a,b \\
c%
\end{array}%
\right\vert t\right) \right]  \label{Dn_(4)} \\
&=&\left( c-n\right) _{n}\,t^{c-n-1}\left( 1-t\right)
^{a+b-c-n}\,_{2}F_{1}\left( \left.
\begin{array}{c}
a-n,b-n \\
c-n%
\end{array}%
\right\vert t\right) ,  \notag \\
&&n=0,1,2,\ldots  \notag
\end{eqnarray}%
and apply the result given in (\ref{2F1_resultado}), to obtain%
\begin{equation*}
2\frac{d^{n}}{dt^{n}}\left( \frac{1}{\sqrt{1-t}}-1\right) =\frac{%
t^{1-n}\left( 1-t\right) ^{-1/2-n}}{\Gamma \left( 2-n\right) }%
\,_{2}F_{1}\left( \left.
\begin{array}{c}
\frac{1}{2}-n,1-n \\
2-n%
\end{array}%
\right\vert t\right) .
\end{equation*}%
According to (\ref{Dn(1/sqrt)}) for $n\geq 1$\ and the definition of the
regularized generalized hypergeometric function given in (\ref{Hyper_Normalized_Generalized_def}), we finally get (\ref%
{2F1_regularized_resultado}).

For $t=0$ and $n=1$, we obtain a indeterminate expression. However,
according to (\ref{Hyper_Generalized_def}), we have that
\begin{equation*}
\,_{2}F_{1}\left( \left.
\begin{array}{c}
a,0 \\
b%
\end{array}%
\right\vert t\right) =1,
\end{equation*}%
thus we obtain the desired result for $n=1$.
\end{proof}

\begin{corollary}
The following identity holds true for $n=1,2,\ldots $ and $t\in
\mathbb{C}
$,
\begin{equation}
P_{n}^{n-1}\left( t\right) =-\left( -2\right) ^{n}\left( \frac{1}{2}\right)
_{n}\,t\left( 1-t^{2}\right) ^{\left( n-1\right) /2}.  \label{P_n_n-1}
\end{equation}
\end{corollary}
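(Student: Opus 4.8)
The plan is to feed the reduction formula (\ref{2F1_regularized_resultado}) into the hypergeometric representation of the Legendre function, exactly as was done for Corollary~(\ref{P-n-n_resultado}) but through the ${}_{2}\tilde{F}_{1}$ identity instead of (\ref{2F1_resultado_(3)}). The key observation is that the numerator parameters $\tfrac{1}{2}-n$ and $1-n$ in (\ref{2F1_regularized_resultado}) differ by $\tfrac{1}{2}$, so formula (\ref{2F1_Legendre}) (i.e. \cite[Eqn. 7.3.1(101)]{Prudnikov3}) applies with $a=\tfrac{1}{2}-n$ and $c=2-n$.

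First I would pass to the regularized form of (\ref{2F1_Legendre}) by dividing through by $\Gamma(c)$, so that it reads ${}_{2}\tilde{F}_{1}\bigl(\tfrac{1}{2}-n,1-n;2-n\,\big|\,t\bigr)=2^{c-1}(-t)^{(1-c)/2}(1-t)^{(c-1)/2-a}\,P_{2a-c}^{\,1-c}\bigl(1/\sqrt{1-t}\bigr)$. This is essential, not cosmetic: for $n\ge 2$ the value $c=2-n$ is a nonpositive integer, so both the bare ${}_{2}F_{1}$ and the factor $\Gamma(c)$ in (\ref{2F1_Legendre}) are undefined, and only the regularized identity survives; since each side is entire in $c$, it persists for those values by analytic continuation in $c$.

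Then I would substitute $a=\tfrac{1}{2}-n$ and $c=2-n$, which yields $a+\tfrac{1}{2}=1-n$, $2a-c=-1-n$, $1-c=n-1$, $(1-c)/2=(n-1)/2$ and $(c-1)/2-a=n/2$, and I would invoke the reflection symmetry $P_{-\nu-1}^{\mu}=P_{\nu}^{\mu}$ to rewrite $P_{-1-n}^{\,n-1}$ as $P_{n}^{\,n-1}$. Inserting (\ref{2F1_regularized_resultado}), so that the left-hand side becomes $2(\tfrac{1}{2})_{n}\,t^{n-1}$, and solving for the Legendre function gives $P_{n}^{\,n-1}\bigl(1/\sqrt{1-t}\bigr)=2^{n}(\tfrac{1}{2})_{n}\,t^{n-1}(-t)^{-(n-1)/2}(1-t)^{-n/2}$. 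Setting $w=1/\sqrt{1-t}$, equivalently $1-t=w^{-2}$ and $t=(w^{2}-1)/w^{2}$, and finally renaming $w$ back to $t$, produces (\ref{P_n_n-1}); the values $t=\pm1$ need no separate discussion, as both sides are analytic in $t$ and agree there by continuity.

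The only delicate point — and the main obstacle — is the bookkeeping of the fractional powers $(-t)^{(n-1)/2}$, $(1-t)^{n/2}$ and $(1-t^{2})^{(n-1)/2}$. A careless split such as $(-t)^{(n-1)/2}=(-1)^{(n-1)/2}t^{(n-1)/2}$ manufactures a spurious half-integer power of $-1$ and produces the wrong overall sign when $n$ is even; one must instead regroup the powers only after the substitution $t\mapsto(w^{2}-1)/w^{2}$, where $t^{n-1}(-t)^{-(n-1)/2}(1-t)^{-n/2}=(w^{2}-1)^{n-1}(1-w^{2})^{-(n-1)/2}\,w=(-1)^{n-1}\,w\,(1-w^{2})^{(n-1)/2}$, leaving only the integer sign $(-1)^{n-1}$. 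Since $2^{n}(-1)^{n-1}=-(-2)^{n}$, this gives exactly $P_{n}^{\,n-1}(t)=-(-2)^{n}(\tfrac{1}{2})_{n}\,t\,(1-t^{2})^{(n-1)/2}$. (Independently of the earlier results, one could also argue from the Rodrigues formula for $P_{n}^{\,n-1}$, using that the $(n-1)$-st derivative of the Legendre polynomial $P_{n}$ collapses to $2^{n}(\tfrac{1}{2})_{n}\,t$ because its lower-degree coefficients vanish by parity; but the route above has the virtue of reusing (\ref{2F1_regularized_resultado}).)
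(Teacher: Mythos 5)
Your proof is correct and follows essentially the same route as the paper: substitute $a=\tfrac12-n$, $c=2-n$ into (\ref{2F1_Legendre}), feed in (\ref{2F1_regularized_resultado}), and apply the reflection property (\ref{Pnumu}) before changing variables $t\mapsto 1/\sqrt{1-t}$. Your added care about passing to the regularized form when $c=2-n$ is a nonpositive integer, and about grouping the fractional powers only after the substitution, addresses details the paper glosses over but does not change the argument.
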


\begin{proof}
Set $a=\frac{1}{2}-n$ and $c=2-n$ in (\ref{2F1_Legendre}), and take into
account (\ref{2F1_regularized_resultado}), to obtain%
\begin{equation*}
P_{-n-1}^{n-1}\left( \frac{1}{\sqrt{1-t}}\right) =-\frac{\left( -2\right)
^{n}\left( \frac{1}{2}\right) _{n}}{\sqrt{1-t}}\left( \frac{t}{t-1}\right)
^{\left( n-1\right) /2},
\end{equation*}%
which, according to the property \cite[Eqn. 3.3.1(1)]{Ederlyi}:
\begin{equation}
P_{-\nu -1}^{\mu }\left( z\right) =P_{\nu }^{\mu }\left( z\right) ,
\label{Pnumu}
\end{equation}%
is equivalent to (\ref{P_n_n-1}).
\end{proof}

\section{Case $n=\ 3$}

In this case, (\ref{Main_Eqn})\ becomes
\begin{equation}
x^{3}-x+t=0.  \label{Eqn_cubic}
\end{equation}

In order to solve (\ref{Eqn_cubic}), we apply the solution of the cubic
equation given in Appendix $A$, considering in (\ref{Cubic_depressed})\ the
negative sign `$-$', $m=\frac{1}{3}$ and $n=\frac{t}{2}$, i.e.
\begin{equation}
x_{3}\left( t\right) =\frac{1}{\sqrt{3}}\left\{
\begin{array}{ll}
\cosh \left( \frac{1}{3}\cosh ^{-1}\sqrt{z}\right) -i\sqrt{3}\sinh \left(
\frac{1}{3}\cosh ^{-1}\sqrt{z}\right) , & z\geq 1, \\
\cos \left( \frac{1}{3}\cos ^{-1}\sqrt{z}\right) -\sqrt{3}\sin \left( \frac{1%
}{3}\cos ^{-1}\sqrt{z}\right) , & z\leq 1.%
\end{array}%
\right.  \label{x3(t)_b}
\end{equation}
where $z=3\left( \frac{3t}{2}\right) ^{2}$. Therefore, from (\ref{x3(t)_Hyp}%
) and (\ref{x3(t)_b})\ we have%
\begin{eqnarray}
&&_{2}F_{1}\left( \left.
\begin{array}{c}
\frac{1}{3},\frac{2}{3} \\
\frac{3}{2}%
\end{array}%
\right\vert z\right)  \label{2F1_(3)} \\
&=&\frac{3}{2\sqrt{z}}\left\{
\begin{array}{ll}
\cosh \left( \frac{1}{3}\cosh ^{-1}\sqrt{z}\right) -i\sqrt{3}\sinh \left(
\frac{1}{3}\cosh ^{-1}\sqrt{z}\right) , & z\geq 1, \\
\cos \left( \frac{1}{3}\cos ^{-1}\sqrt{z}\right) -\sqrt{3}\sin \left( \frac{1%
}{3}\cos ^{-1}\sqrt{z}\right) , & z\leq 1.%
\end{array}%
\right.  \notag
\end{eqnarray}

Note that we can simplify (\ref{2F1_(3)}) considering that
\begin{eqnarray*}
\frac{3}{\sqrt{z}}\sin \left( \frac{1}{3}\sin ^{-1}\sqrt{z}\right) &=&\frac{3%
}{\sqrt{z}}\sin \left( \frac{\pi /2-\cos ^{-1}\sqrt{z}}{3}\right) \\
&=&\frac{3}{2\sqrt{z}}\left\{ \cos \left( \frac{\cos ^{-1}\sqrt{z}}{3}%
\right) -\sqrt{3}\sin \left( \frac{\cos ^{-1}\sqrt{z}}{3}\right) \right\} .
\end{eqnarray*}

Since
\begin{equation*}
\cos ^{-1}x=\left\{
\begin{array}{ll}
i\cosh ^{-1}x, & x\geq 1, \\
-i\cosh ^{-1}x, & x\leq 1,%
\end{array}%
\right.
\end{equation*}%
and $\cos \left( ix\right) =\cosh x$, and $\sin \left( ix\right) =i\sinh x$,
we conclude that $\forall z\in
\mathbb{C}
$,%
\begin{equation}
_{2}F_{1}\left( \left.
\begin{array}{c}
\frac{1}{3},\frac{2}{3} \\
\frac{3}{2}%
\end{array}%
\right\vert z\right) =\frac{3}{\sqrt{z}}\sin \left( \frac{1}{3}\sin ^{-1}%
\sqrt{z}\right) .  \label{2F1_(3)_resultado}
\end{equation}

The result given (\ref{2F1_(3)_resultado}) can be obtained from \cite[Eqn.
2.8(12)]{Ederlyi}:%
\begin{equation*}
_{2}F_{1}\left( \left.
\begin{array}{c}
\frac{1+a}{2},\frac{1-a}{3} \\
\frac{3}{2}%
\end{array}%
\right\vert \sin ^{2}z\right) =\frac{\sin a\,z}{a\sin z},
\end{equation*}%
taking $a=\frac{1}{3}$. Nonetheless, by differentiation, we obtain from (\ref%
{2F1_(3)_resultado})\ the following interesting identity.

\begin{theorem}
For $n=1,2,\ldots $ and $z\in
\mathbb{C}
\backslash \left\{ 0,1\right\} $, we have:%
\begin{eqnarray}
&&_{2}\tilde{F}_{1}\left( \left.
\begin{array}{c}
\frac{1}{3},\frac{2}{3} \\
\frac{3}{2}-n%
\end{array}%
\right\vert z\right)  \label{2F1_Bell_resultado} \\
&=&\frac{6\,z^{n-1/2}}{\sqrt{\pi }}\frac{d^{n}}{dz^{n}}\left[ \sin \left(
\frac{1}{3}\sin ^{-1}\sqrt{z}\right) \right]  \label{Bell_2} \\
&=&\frac{6\,z^{n-1/2}}{\sqrt{\pi }}\sum_{k=1}^{n}\sin \left( \frac{\sin ^{-1}%
\sqrt{z}}{3}+\frac{\pi k}{2}\right) B_{n,k}\left( h_{1}\left( z\right)
,\ldots ,h_{n-k+1}\left( z\right) \right) ,  \label{Bell_3}
\end{eqnarray}%
where $B_{n,k}\left( x_{1},\ldots ,x_{n-k+1}\right) $ denotes the \textit{%
Bell polynomial} \cite[p. 133]{Comtet}. Also, we have defined
\begin{equation*}
h_{s}\left( z\right) =\frac{\left( -i\,\right) ^{s-1}\left( s-1\right) !}{6%
\left[ z\left( 1-z\right) \right] ^{s/2}}\ P_{s-1}\left( \frac{1-2z}{2\sqrt{%
z\left( z-1\right) }}\right) ,
\end{equation*}%
where $P_{n}\left( x\right) $ is a \textit{Legendre polynomial.}
\end{theorem}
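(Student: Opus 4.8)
The plan is to obtain the three members of the theorem by differentiating the closed form (\ref{2F1_(3)_resultado}) $n$ times and then unpacking the resulting $n$-th derivative of $\sin\!\left(\tfrac{1}{3}\sin^{-1}\sqrt{z}\right)$ with Fa\`a di Bruno's formula.

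\textbf{Step 1 (the first two members).} I would first rewrite (\ref{2F1_(3)_resultado}) as $z^{1/2}\,{}_{2}F_{1}\!\left(\tfrac{1}{3},\tfrac{2}{3};\tfrac{3}{2};z\right)=3\sin\!\left(\tfrac{1}{3}\sin^{-1}\sqrt{z}\right)$ and apply the differentiation formula (\ref{Dn_(2)}) with $a=\tfrac{1}{3}$, $b=\tfrac{2}{3}$, $c=\tfrac{3}{2}$. Since $c-n=\tfrac{3}{2}-n$ is never a nonpositive integer, no regularization subtlety arises on the right and one gets $3\,\dfrac{d^{n}}{dz^{n}}\sin\!\left(\tfrac{1}{3}\sin^{-1}\sqrt{z}\right)=\left(\tfrac{3}{2}-n\right)_{n}z^{1/2-n}\,{}_{2}F_{1}\!\left(\tfrac{1}{3},\tfrac{2}{3};\tfrac{3}{2}-n;z\right)$. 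Solving for ${}_{2}F_{1}$, dividing by $\Gamma\!\left(\tfrac{3}{2}-n\right)$ to pass to ${}_{2}\tilde{F}_{1}$ via (\ref{Hyper_Normalized_Generalized_def}), and using $\left(\tfrac{3}{2}-n\right)_{n}=\Gamma\!\left(\tfrac{3}{2}\right)/\Gamma\!\left(\tfrac{3}{2}-n\right)$ with $\Gamma\!\left(\tfrac{3}{2}\right)=\tfrac{\sqrt{\pi}}{2}$, all the gamma factors cancel and leave exactly the constant $6/\sqrt{\pi}$; this yields the equality of (\ref{2F1_Bell_resultado}) and (\ref{Bell_2}).

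\textbf{Step 2 (the Bell-polynomial member).} For (\ref{Bell_3}) I would apply Fa\`a di Bruno's formula $\dfrac{d^{n}}{dz^{n}}f(g(z))=\sum_{k=1}^{n}f^{(k)}(g(z))\,B_{n,k}\!\big(g'(z),\dots,g^{(n-k+1)}(z)\big)$ with $f(u)=\sin(u/3)$ and $g(z)=\sin^{-1}\sqrt{z}$. Since $f^{(k)}(u)=3^{-k}\sin(u/3+k\pi/2)$, the factor $3^{-k}$ is absorbed into the Bell polynomial by its homogeneity $B_{n,k}(\lambda x_{1},\dots,\lambda x_{n-k+1})=\lambda^{k}B_{n,k}(x_{1},\dots,x_{n-k+1})$, provided one sets $h_{s}(z):=\tfrac{1}{3}\,g^{(s)}(z)$; this already produces the structural form (\ref{Bell_3}). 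It then remains to identify $h_{s}$ with the stated Legendre expression. As $g'(z)=\big(2\sqrt{z(1-z)}\big)^{-1}$, I need the $(s-1)$-st derivative of $[z(1-z)]^{-1/2}$. Substituting $x=2z-1$, so that $z(1-z)=\tfrac{1}{4}(1-x^{2})$, reduces this to the identity $\dfrac{d^{m}}{dx^{m}}(1-x^{2})^{-1/2}=(-i)^{m}\,m!\,(1-x^{2})^{-(m+1)/2}\,P_{m}\!\left(\dfrac{ix}{\sqrt{1-x^{2}}}\right)$, which I would prove by writing $(1-(x+h)^{2})^{-1/2}=(1-x^{2})^{-1/2}\big(1-2y\tau+\tau^{2}\big)^{-1/2}$ with $\tau=-ih/\sqrt{1-x^{2}}$ and $y=ix/\sqrt{1-x^{2}}$, and reading off the coefficient of $h^{m}$ from the Legendre generating function. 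Undoing the substitution via $1-x^{2}=4z(1-z)$ and, with the branch $\sqrt{z(1-z)}=-i\sqrt{z(z-1)}$, $\dfrac{ix}{\sqrt{1-x^{2}}}=\dfrac{1-2z}{2\sqrt{z(z-1)}}$, the powers of $2$ cancel and $h_{s}$ collapses to exactly $\dfrac{(-i)^{s-1}(s-1)!}{6[z(1-z)]^{s/2}}\,P_{s-1}\!\left(\dfrac{1-2z}{2\sqrt{z(z-1)}}\right)$.

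\textbf{Main obstacle.} The only delicate point is the branch bookkeeping in Step 2: the factor $(-i)^{s-1}$ and the precise Legendre argument $\tfrac{1-2z}{2\sqrt{z(z-1)}}$ come out correctly only for a consistent choice of the square-root branches (the choice $\sqrt{z(1-z)}=-i\sqrt{z(z-1)}$ above), and since $P_{s-1}$ has parity $(-1)^{s-1}$ an inconsistent choice would introduce a spurious $(-1)^{s-1}$. Everything else — the derivative formula for $(1-x^{2})^{-1/2}$, the homogeneity of $B_{n,k}$, and the Pochhammer/gamma simplifications — is routine, and the hypothesis $z\in\mathbb{C}\setminus\{0,1\}$ is precisely what keeps $z^{n-1/2}$, $\sin^{-1}\sqrt{z}$ and $[z(1-z)]^{-s/2}$ away from their branch points.
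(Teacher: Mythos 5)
Your proof is correct and follows essentially the same route as the paper: apply the differentiation formula (\ref{Dn_(2)}) with $a=\tfrac{1}{3}$, $b=\tfrac{2}{3}$, $c=\tfrac{3}{2}$ to the closed form (\ref{2F1_(3)_resultado}), pass to the regularized function to get the factor $6/\sqrt{\pi}$, and then expand the resulting $n$-th derivative of $\sin\left(\tfrac{1}{3}\sin^{-1}\sqrt{z}\right)$ by Fa\`{a} di Bruno's formula. The only (cosmetic) differences are that you place the factor $\tfrac{1}{3}$ in the outer function and absorb it via the homogeneity of $B_{n,k}$ rather than in the inner one, and that where the paper simply cites Brychkov's tabulated formula for $\tfrac{d^{n}}{dz^{n}}\sin^{-1}\left(a\sqrt{z}\right)$ you rederive the equivalent identity from the Legendre generating function, with branch bookkeeping consistent with the stated $h_{s}$.
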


\begin{proof}
Set $a=\frac{1}{3}$, $b=\frac{2}{3}$, and $c=\frac{3}{2}$ in (\ref{Dn_(2)})\
to obtain%
\begin{eqnarray}
&&\frac{1}{3}\frac{d^{n}}{dz^{n}}\left[ \sqrt{z}\,_{2}F_{1}\left( \left.
\begin{array}{c}
\frac{1}{3},\frac{2}{3} \\
\frac{3}{2}%
\end{array}%
\right\vert z\right) \right]   \label{Dn(2F1)_(3)} \\
&=&\frac{\sqrt{\pi }\,z^{1/2-n}}{6\,}\,_{2}\tilde{F}_{1}\left( \left.
\begin{array}{c}
\frac{1}{3},\frac{2}{3} \\
\frac{3}{2}-n%
\end{array}%
\right\vert z\right) ,  \notag
\end{eqnarray}%
and substitute (\ref{2F1_(3)_resultado})\ in (\ref{Dn(2F1)_(3)}), to get%
\begin{equation}
_{2}\tilde{F}_{1}\left( \left.
\begin{array}{c}
\frac{1}{3},\frac{2}{3} \\
\frac{3}{2}-n%
\end{array}%
\right\vert z\right) =\frac{6\,z^{n-1/2}}{\sqrt{\pi }}\frac{d^{n}}{dz^{n}}%
\left[ \sin \left( \frac{1}{3}\sin ^{-1}\sqrt{z}\right) \right] .
\label{Dn(sin)}
\end{equation}%
In order to calculate the $n$-th derivative given in (\ref{Dn(sin)}), we
apply Fa\`{a} di Bruno's formula \cite[p. 137]{Comtet}:%
\begin{equation}
\frac{d^{n}}{dz^{n}}f\left[ g\left( z\right) \right] =\sum_{k=1}^{n}f^{%
\left( k\right) }\left[ g\left( z\right) \right] \ B_{n,k}\left( g^{\prime
}\left( z\right) ,g^{\prime \prime }\left( z\right) ,\ldots ,g^{\left(
n-k+1\right) }\left( z\right) \right) ,  \label{Faa_DiBruno}
\end{equation}%
Set $f\left( z\right) =\sin z$ and $g\left( z\right) =\frac{1}{3}\sin ^{-1}%
\sqrt{z}$ in (\ref{Faa_DiBruno})\ and take into account the differentiation
formula \cite[Eqn. 1.1.7(7)]{Brychov}:%
\begin{eqnarray*}
&&\frac{d^{n}}{dz^{n}}\sin ^{-1}\left( a\sqrt{z}\right) =\frac{\left(
-i\right) ^{n-1}}{2}\left( n-1\right) !a^{n}\left( z-a^{2}z^{2}\right)
^{-n/2}P_{n-1}\left( \frac{1-2a^{2}z}{2a\sqrt{a^{2}z^{2}-z}}\right) , \\
&&n\geq 1,
\end{eqnarray*}%
to arrive at (\ref{2F1_Bell_resultado}), as we wanted to prove.
\end{proof}

\begin{remark}
On \ the one hand, according to Gauss summation formula (\ref{Gauss_sum}),
the reguralized hypergeometric function given in (\ref{2F1_Bell_resultado})\
is divergent for $z=1$ and $n=1,2,\ldots $ On the other hand, for $z=0$, (%
\ref{Bell_2})\ and (\ref{Bell_3})\ yield indeterminate expressions. However,
according to (\ref{Hyper_z=1}), $_{2}F_{1}\left( 1/2,2/3;3/2-n;0\right) =1$
for $n=1,2,\ldots $
\end{remark}

Next, we provide the elementary representations of (\ref{2F1_Bell_resultado}%
)\ for $n=1,2$:%
\begin{equation*}
_{2}F_{1}\left( \left.
\begin{array}{c}
\frac{1}{3},\frac{2}{3} \\
\frac{1}{2}%
\end{array}%
\right\vert z\right) =\frac{\cos \left( \frac{1}{3}\sin ^{-1}\sqrt{z}\right)
}{\sqrt{1-z}},
\end{equation*}%
and
\begin{eqnarray*}
&&_{2}F_{1}\left( \left.
\begin{array}{c}
\frac{1}{3},\frac{2}{3} \\
-\frac{1}{2}%
\end{array}%
\right\vert z\right) \\
&=&\frac{\left( 3-6z\right) \cos \left( \frac{1}{3}\sin ^{-1}\sqrt{z}\right)
+\sqrt{-z\left( z-1\right) }\sin \left( \frac{1}{3}\sin ^{-1}\sqrt{z}\right)
}{3\left( 1-z\right) ^{3/2}}.
\end{eqnarray*}

\begin{theorem}
For $\mathrm{Re}\,\left( 2p-x\right) >0$, the following definite integral
holds true:%
\begin{eqnarray}
&&\int_{0}^{\infty }\frac{e^{-pt}}{t^{5/6}}D_{1/3}\left( -\sqrt{2xt}\right)
dt  \label{Int_3_resultado} \\
&=&\frac{2\,\Gamma \left( \frac{1}{3}\right) }{\left( 2p+x\right) ^{1/6}}%
\left[ \cos \left( \frac{1}{3}\cos ^{-1}\sqrt{\frac{2x}{2p+x}}\right) -\sin
\left( \frac{1}{3}\sin ^{-1}\sqrt{\frac{2x}{2p+x}}\right) \right] ,  \notag
\end{eqnarray}%
where $D_{\nu }\left( z\right) $ denotes the \textit{parabolic cylinder
function} \cite[Chap. VIII]{Ederlyi2}.
\end{theorem}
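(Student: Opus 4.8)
The plan is to bring the integrand into the shape demanded by the Laplace-type formula (\ref{Int_Andrews}), which forces us to write $D_{1/3}$ in terms of confluent hypergeometric functions. Recall \cite[Chap. VIII]{Ederlyi2} that
\begin{equation*}
D_{\nu }\left( \zeta \right) =2^{\nu /2}e^{-\zeta ^{2}/4}\left[ \frac{\sqrt{
\pi }}{\Gamma \left( \frac{1-\nu }{2}\right) }\,_{1}F_{1}\left( \left.
\begin{array}{c}
-\frac{\nu }{2} \\
\frac{1}{2}
\end{array}
\right\vert \frac{\zeta ^{2}}{2}\right) -\frac{\sqrt{2\pi }\,\zeta }{\Gamma
\left( -\frac{\nu }{2}\right) }\,_{1}F_{1}\left( \left.
\begin{array}{c}
\frac{1-\nu }{2} \\
\frac{3}{2}
\end{array}
\right\vert \frac{\zeta ^{2}}{2}\right) \right] .
\end{equation*}
First I would take $\nu =\frac{1}{3}$ and $\zeta =-\sqrt{2xt}$, so that $\zeta ^{2}/2=xt$, $e^{-\zeta ^{2}/4}=e^{-xt/2}$, $\frac{1-\nu}{2}=\frac{1}{3}$, $-\frac{\nu}{2}=-\frac{1}{6}$ and $-\sqrt{2\pi }\,\zeta =2\sqrt{\pi xt}$. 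Folding the factor $e^{-xt/2}$ into $e^{-pt}$ produces $e^{-st}$ with $s=p+\frac{x}{2}=\frac{2p+x}{2}$, and the integral splits as
\begin{equation*}
\int_{0}^{\infty }\frac{e^{-pt}}{t^{5/6}}D_{1/3}\left( -\sqrt{2xt}\right) dt=\frac{2^{1/6}\sqrt{\pi }}{\Gamma \left( \tfrac{1}{3}\right) }\int_{0}^{\infty }e^{-st}t^{-5/6}\,_{1}F_{1}\!\left( -\tfrac{1}{6};\tfrac{1}{2};xt\right) dt+\frac{2^{7/6}\sqrt{\pi x}}{\Gamma \left( -\tfrac{1}{6}\right) }\int_{0}^{\infty }e^{-st}t^{-1/3}\,_{1}F_{1}\!\left( \tfrac{1}{3};\tfrac{3}{2};xt\right) dt.
\end{equation*}
Since each $\,_{1}F_{1}$ grows like $e^{xt}$ as $t\to \infty $, both integrals converge precisely when $\mathrm{Re}\left( s-x\right) >0$, i.e. $\mathrm{Re}\left( 2p-x\right) >0$, which is exactly the hypothesis.

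Next I would apply (\ref{Int_Andrews}) with $\alpha =\frac{1}{6}$ to the first integral and $\alpha =\frac{2}{3}$ to the second, turning them into $\frac{\Gamma (1/6)}{s^{1/6}}\,_{2}F_{1}(-\tfrac{1}{6},\tfrac{1}{6};\tfrac{1}{2};w)$ and $\frac{\Gamma (2/3)}{s^{2/3}}\,_{2}F_{1}(\tfrac{1}{3},\tfrac{2}{3};\tfrac{3}{2};w)$ respectively, where $w=x/s=2x/(2p+x)$. The second of these is reduced at once by (\ref{2F1_(3)_resultado}), giving $\,_{2}F_{1}(\tfrac{1}{3},\tfrac{2}{3};\tfrac{3}{2};w)=\frac{3}{\sqrt{w}}\sin \left( \tfrac{1}{3}\sin ^{-1}\sqrt{w}\right) $. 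For the first I would use the classical reduction $\,_{2}F_{1}(-a,a;\tfrac{1}{2};\sin ^{2}\theta )=\cos 2a\theta $ (a member of Schwarz's list \cite{Schwarz}, see also \cite[Sect. 2.8]{Ederlyi}) with $a=\tfrac{1}{6}$ and $\theta =\sin ^{-1}\sqrt{w}$, so that $\,_{2}F_{1}(-\tfrac{1}{6},\tfrac{1}{6};\tfrac{1}{2};w)=\cos \left( \tfrac{1}{3}\sin ^{-1}\sqrt{w}\right) $.

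Substituting these back, with $s=\frac{2p+x}{2}$ the powers of $s$ and of $2$ collapse and one is left with $\frac{1}{(2p+x)^{1/6}}$ times $c_{1}\cos \left( \tfrac{1}{3}\sin ^{-1}\sqrt{w}\right) +c_{2}\sin \left( \tfrac{1}{3}\sin ^{-1}\sqrt{w}\right) $, where $c_{1}=2^{1/3}\sqrt{\pi }\,\Gamma (\tfrac{1}{6})/\Gamma (\tfrac{1}{3})$ and $c_{2}=6\cdot 2^{1/3}\sqrt{\pi }\,\Gamma (\tfrac{2}{3})/\Gamma (-\tfrac{1}{6})$. Using the duplication formula $\Gamma (z)\Gamma (z+\tfrac{1}{2})=2^{1-2z}\sqrt{\pi }\,\Gamma (2z)$ at $z=\tfrac{1}{6}$ (whence $\Gamma (\tfrac{1}{6})\Gamma (\tfrac{2}{3})=2^{2/3}\sqrt{\pi }\,\Gamma (\tfrac{1}{3})$), the reflection formula $\Gamma (z)\Gamma (1-z)=\pi /\sin \pi z$, and $\Gamma (-\tfrac{1}{6})=-6\,\Gamma (\tfrac{5}{6})$, I expect $c_{1}=\sqrt{3}\,\Gamma (\tfrac{1}{3})$ and $c_{2}=-\Gamma (\tfrac{1}{3})$, so the integral equals $\frac{\Gamma (1/3)}{(2p+x)^{1/6}}\left[ \sqrt{3}\cos \left( \tfrac{1}{3}\sin ^{-1}\sqrt{w}\right) -\sin \left( \tfrac{1}{3}\sin ^{-1}\sqrt{w}\right) \right] $. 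Finally, setting $\phi =\tfrac{1}{3}\sin ^{-1}\sqrt{w}$ and $\psi =\tfrac{1}{3}\cos ^{-1}\sqrt{w}$, one has $\phi +\psi =\tfrac{\pi }{6}$, hence $2\cos \psi =\sqrt{3}\cos \phi +\sin \phi $, and therefore $\sqrt{3}\cos \phi -\sin \phi =2(\cos \psi -\sin \phi )$; recalling $w=2x/(2p+x)$ this is precisely (\ref{Int_3_resultado}).

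The argument is largely mechanical; the care is all in the last two steps. I expect the main obstacle to be the bookkeeping: choosing the split of $e^{-pt}$ so as to get the sharp convergence condition $\mathrm{Re}(2p-x)>0$, reducing the quotient of four Gamma values to the single factor $\Gamma (\tfrac{1}{3})$, and recasting the combination of $\cos \left( \tfrac{1}{3}\sin ^{-1}\sqrt{w}\right) $ and $\sin \left( \tfrac{1}{3}\sin ^{-1}\sqrt{w}\right) $ into the mixed $\cos ^{-1}/\sin ^{-1}$ form appearing in the statement.
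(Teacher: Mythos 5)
Your computation is correct, and I checked the constants: with $s=p+x/2$ one indeed gets $c_{1}=2^{1/3}\sqrt{\pi}\,\Gamma(\tfrac{1}{6})/\Gamma(\tfrac{1}{3})=\sqrt{3}\,\Gamma(\tfrac{1}{3})$ and $c_{2}=3\cdot 2^{4/3}\sqrt{\pi}\,\Gamma(\tfrac{2}{3})/\Gamma(-\tfrac{1}{6})=-\Gamma(\tfrac{1}{3})$, and the identity $\sqrt{3}\cos\phi-\sin\phi=2(\cos\psi-\sin\phi)$ with $\phi+\psi=\pi/6$ delivers exactly (\ref{Int_3_resultado}). Your route, however, is genuinely different from the paper's. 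You decompose $D_{1/3}(-\sqrt{2xt})$ into its even and odd confluent hypergeometric components, push each through (\ref{Int_Andrews}), and reduce the two resulting Gauss functions separately: the odd part via (\ref{2F1_(3)_resultado}) (the $\sin$ term) and the even part via ${}_{2}F_{1}(a,-a;\tfrac{1}{2};z)=\cos(2a\sin^{-1}\sqrt{z})$ (the $\cos$ term). The paper instead works backwards: it first computes $\int_{0}^{\infty}e^{-st}t^{-1/3}\,{}_{1}F_{1}(\tfrac{1}{3};\tfrac{3}{2};xt)\,dt$ from (\ref{Int_Andrews}) and (\ref{2F1_(3)_resultado}), then writes ${}_{1}F_{1}(\tfrac{1}{3};\tfrac{3}{2};z)$ as a multiple of $D_{1/3}(-\sqrt{2z})-D_{1/3}(\sqrt{2z})$, and evaluates the companion integral $\int_{0}^{\infty}e^{-(s-x/2)t}t^{-5/6}D_{1/3}(\sqrt{2xt})\,dt$ from a tabulated formula of Erd\'{e}lyi \cite[Eqn. 8.3(11)]{Ederlyi2} (which internally uses the same $\cos(2a\sin^{-1}\sqrt{z})$ reduction), finally solving for the wanted integral by subtraction. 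Your argument is more self-contained — it needs only the standard even/odd series representation of $D_{\nu}$ rather than the Prudnikov formula 7.11.1(10) and the external tabulated Laplace transform — at the price of the Gamma-function bookkeeping you carried out explicitly. The only caveat, shared by both arguments, is that (\ref{Int_Andrews}) produces a ${}_{2}F_{1}$ evaluated at $w=x/s$, which for $\mathrm{Re}(2p-x)>0$ need not lie in the unit disc, so strictly one should invoke analytic continuation in $p$ of both sides; this does not affect the validity of your derivation.
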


\begin{proof}
Set $a_{1}=\frac{1}{3}$, $b_{1}=\frac{3}{2}$, and $\alpha =\frac{2}{3}$ in (%
\ref{Int_Andrews})\footnote{%
It is worth noting that the other choice, i.e. $a_{1}=\frac{2}{3}$ and $%
\alpha =\frac{1}{3}$, leads to non-convergent integrals.}, taking into
account (\ref{2F1_(3)_resultado}), to obtain%
\begin{equation}
\int_{0}^{\infty }\frac{e^{-st}}{t^{1/3}}\,_{1}F_{1}\left( \left.
\begin{array}{c}
\frac{1}{3} \\
\frac{3}{2}%
\end{array}%
\right\vert xt\right) dt=\frac{3\Gamma \left( \frac{2}{3}\right) }{s^{1/6}%
\sqrt{x}}\sin \left( \frac{1}{3}\sin ^{-1}\sqrt{\frac{x}{s}}\right) .
\label{Int_(3)_1}
\end{equation}%
Apply now\ the following formula with $a=\frac{1}{3}$ \cite[Eqn. 7.11.1(10)]%
{Prudnikov3}:%
\begin{equation*}
_{1}F_{1}\left( \left.
\begin{array}{c}
a \\
\frac{3}{2}%
\end{array}%
\right\vert z\right) =\frac{2^{a-5/2}}{\sqrt{\pi \,z}}\Gamma \left( a-\frac{1%
}{2}\right) e^{z/2}\left[ D_{1-2a}\left( -\sqrt{2z}\right) -D_{1-2a}\left(
\sqrt{2z}\right) \right] ,
\end{equation*}%
hence the RHS\ of (\ref{Int_(3)_1})\ becomes:%
\begin{eqnarray}
&&\int_{0}^{\infty }\frac{e^{-st}}{t^{1/3}}\,_{1}F_{1}\left( \left.
\begin{array}{c}
\frac{1}{3} \\
\frac{3}{2}%
\end{array}%
\right\vert xt\right) dt=\frac{2^{-13/6}}{\sqrt{\pi \,x}}\Gamma \left( \frac{%
-1}{6}\right)  \label{Int_(3)_2} \\
&&\qquad \qquad \left[ \int_{0}^{\infty }\frac{e^{-\left( s-x/2\right) t}}{%
t^{5/6}}D_{1/3}\left( -\sqrt{2xt}\right) dt-\int_{0}^{\infty }\frac{%
e^{-\left( s-x/2\right) t}}{t^{5/6}}D_{1/3}\left( \sqrt{2xt}\right) dt\right]
.  \notag
\end{eqnarray}%
Consider now the definite integral \cite[Eqn. 8.3(11)]{Ederlyi2}:%
\begin{eqnarray*}
&&\int_{0}^{\infty }\frac{e^{-zt}}{t^{1-\beta /2}}D_{-\nu }\left( 2\sqrt{kt}%
\right) dt=\frac{2^{1-\beta -\nu /2}\sqrt{\pi }\Gamma \left( \beta \right) }{%
\Gamma \left( \frac{\nu +\beta +1}{2}\right) \left( z+k\right) ^{\beta /2}}%
\,_{2}F_{1}\left( \left.
\begin{array}{c}
\frac{\nu }{2},\frac{\beta }{2} \\
\frac{\nu +\beta +1}{2}%
\end{array}%
\right\vert \frac{z-k}{z+k}\right) , \\
&&\mathrm{Re}\,\beta >0,\mathrm{Re}\,z/k>0,
\end{eqnarray*}%
and the reduction formula \cite[Eqn. 7.3.1(83)]{Prudnikov3}:%
\begin{equation*}
_{2}F_{1}\left( \left.
\begin{array}{c}
a,-a \\
\frac{1}{2}%
\end{array}%
\right\vert z\right) =\cos \left( 2a\sin ^{-1}\sqrt{z}\right) ,
\end{equation*}%
to arrive at%
\begin{equation}
\int_{0}^{\infty }\frac{e^{-\left( s-x/2\right) t}}{t^{5/6}}D_{1/3}\left(
\sqrt{2xt}\right) dt=\frac{2^{5/6}\Gamma \left( \frac{1}{3}\right) }{s^{1/6}}%
\cos \left( \frac{1}{3}\cos ^{-1}\sqrt{\frac{x}{s}}\right) .
\label{Int_(3)_3}
\end{equation}%
Therefore, taking into account (\ref{Int_(3)_1})-(\ref{Int_(3)_3}), as well
as \cite[Eqns. 1.2.1\&3]{Lebedev}:
\begin{equation*}
\frac{\Gamma \left( \frac{2}{3}\right) }{\Gamma \left( -\frac{1}{6}\right)
\Gamma \left( \frac{1}{3}\right) }=\frac{-1}{6\times 2^{1/3}\sqrt{\pi }},
\end{equation*}%
after some algebra, we conclude (\ref{Int_3_resultado}), as we wanted to
prove.
\end{proof}

\section{Case $n=\ 4$}

In this case, (\ref{Main_Eqn})\ becomes
\begin{equation}
x^{4}-x+t=0.  \label{Eqn_Quartic}
\end{equation}

To solve (\ref{Eqn_Quartic}), we consider $p=0$, $q=-1$ and $r=t$ in the
solution of the quartic equation given in Appendix $B$, i.e. (\ref%
{Quartic_depressed}). Thereby, (\ref{gamma_def}) and (\ref{beta_def}) become
\begin{eqnarray}
\gamma &=&\frac{1}{2}\left( \alpha ^{2}+\frac{1}{\alpha }\right) ,
\label{gamma_resultado} \\
\beta &=&\frac{t}{\gamma }.  \label{beta_rresultado}
\end{eqnarray}

Therefore, setting $\xi =\alpha ^{2}$, the resolvent cubic (\ref%
{Resolvent_cubic_def})\ is
\begin{equation*}
\xi ^{3}-4t\xi -1=0,
\end{equation*}%
which can be solved taking in (\ref{Cubic_depressed})\ the `$-$' sign, $m=%
\frac{4t}{3}$ and $n=-\frac{1}{2}$. Thereby, according to (\ref{x_cosh}) and
(\ref{x_cos}), and defining $z=4\left( \frac{4t}{3}\right) ^{3}$, we arrive
at
\begin{equation}
\xi \left( z\right) =\left\{
\begin{array}{ll}
-2^{2/3}z^{1/6}\cosh \left( \frac{1}{3}\cosh ^{-1}\left( \frac{-1}{\sqrt{z}}%
\right) \right) , & z\leq 1, \\
-2^{2/3}z^{1/6}\cos \left( \frac{1}{3}\cos ^{-1}\left( \frac{-1}{\sqrt{z}}%
\right) \right) , & z\geq 1.%
\end{array}%
\right.  \label{xi_def}
\end{equation}

Note that both branches in (\ref{xi_def})\ are equivalent, if we consider $%
z\in
\mathbb{C}
$, thus let us define the following function:

\begin{definition}
\begin{equation}
g\left( z\right) =-z^{1/6}\cosh \left( \frac{1}{3}\cosh ^{-1}\left( \frac{-1%
}{\sqrt{z}}\right) \right) .  \label{g(z)_def}
\end{equation}
\end{definition}

By inspection, the solution of (\ref{Main_Eqn})\ for $n=4$\ corresponding to
(\ref{x4(t)_Hyp}) is just the solution $x_{1}$ in (\ref{x_1,2_quartic}), i.e.%
\begin{equation}
x_{1}=\frac{1}{2}\left( -\alpha + \sqrt{\alpha ^{2}-4\beta }\right) .
\label{x1_quartic}
\end{equation}

Therefore, from (\ref{x4(t)_Hyp}) on the one hand,\ and from (\ref%
{gamma_resultado})-(\ref{x1_quartic})\ on the other hand, we finally obtain:

\begin{theorem}
For $z\in
\mathbb{C}
$, we have%
\begin{equation}
_{3}F_{2}\left( \left.
\begin{array}{c}
\frac{1}{4},\frac{1}{2},\frac{3}{4} \\
\frac{2}{3},\frac{4}{3}%
\end{array}%
\right\vert z\right) =\frac{4}{3}z^{-1/3}\left[ \sqrt{g\left( z\right) +%
\frac{3z^{1/3}\sqrt{g\left( z\right) }}{1-2\left[ g\left( z\right) \right]
^{3/2}}}-\sqrt{g\left( z\right) }\right] .  \label{3F2_resultado}
\end{equation}
\end{theorem}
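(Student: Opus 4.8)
The plan is to evaluate the particular root $x_{4}(t)$ of (\ref{Eqn_Quartic}) in two ways and compare. The analytic evaluation is (\ref{x4(t)_Hyp}): $x_{4}(t)$ equals $t$ times the ${}_{3}F_{2}$ appearing on the left of (\ref{3F2_resultado}), with $z=4(4t/3)^{3}$. The algebraic evaluation is Ferrari's solution of the quartic as prepared in Appendix~B: one has $\gamma$, $\beta$ from (\ref{gamma_resultado})--(\ref{beta_rresultado}) and the root $x_{1}$ from (\ref{x1_quartic}), where the auxiliary parameter satisfies $\alpha^{2}=\xi(z)=2^{2/3}g(z)$, $\xi(z)$ being the root of the resolvent cubic produced in (\ref{xi_def})--(\ref{g(z)_def}). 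Equating the two expressions for $x_{4}(t)$ and dividing by $t$ should yield (\ref{3F2_resultado}). Since the ${}_{3}F_{2}$ series converges for $|z|<1$, the identity need only be established for $t$ (hence $z$) near $0$ and is then propagated by analytic continuation; the statement ``for $z\in\mathbb{C}$'' is understood with the branch cuts inherited from $g$.

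Concretely, I would first invert $z=4(4t/3)^{3}$ to obtain $t=\tfrac34(z/4)^{1/3}=3\cdot2^{-8/3}\,z^{1/3}$, then set $\alpha=\sqrt{\xi(z)}=2^{1/3}\sqrt{g(z)}$ (for the appropriate branch) and substitute into (\ref{gamma_resultado}) to write $\gamma$ as an explicit elementary expression in $g(z)$, and $\beta=t/\gamma$ from (\ref{beta_rresultado}) as an expression in $g(z)$ and $z^{1/3}$. Carrying out the arithmetic --- clearing the radicals $\sqrt{g(z)}$, $[g(z)]^{3/2}$ and simplifying --- reduces the discriminant $\alpha^{2}-4\beta$ in (\ref{x1_quartic}) to a constant multiple of the expression $g(z)+\frac{3z^{1/3}\sqrt{g(z)}}{1-2[g(z)]^{3/2}}$ sitting under the inner square root in (\ref{3F2_resultado}). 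Substituting this back into $x_{1}=\tfrac12\!\left(-\alpha+\sqrt{\alpha^{2}-4\beta}\right)$ and dividing by $t=3\cdot2^{-8/3}z^{1/3}$ produces the overall prefactor $\tfrac43z^{-1/3}$ together with the displayed combination of the two radicals, which is precisely (\ref{3F2_resultado}).

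The main obstacle is the consistent choice of branches. The cube roots $z^{1/3}$ and $z^{1/6}$ (the latter buried inside $g$), the square roots $\sqrt{g(z)}$ and $[g(z)]^{3/2}$, and the sign of $\alpha=\pm\sqrt{\xi(z)}$ must all be fixed so that, among the four algebraic roots of the quartic, the one delivered by (\ref{x1_quartic}) is exactly the branch represented by the power series (\ref{x4(t)_Hyp}) --- the root that is analytic at $t=0$ with $x_{4}(t)\sim t$ there; this is the selection made ``by inspection'' in the paragraph preceding (\ref{x1_quartic}). The cleanest way to certify that all the signs have been chosen coherently is to pass to the limit $z\to0$: on the left $\,{}_{3}F_{2}\to1$ by (\ref{Hyper_z=1}), while on the right one uses $\cosh^{-1}(-1/\sqrt z)=\ln(2/\sqrt z)+i\pi+O(z)$ to extract the small-$z$ behaviour of $g(z)$ and then checks that the resulting indeterminate quotient (with both $z^{1/3}$ and a factor built from $g$ tending to $0$) does tend to $1$. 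Once the branch conventions are nailed down, the remainder is routine, if somewhat lengthy, algebra with the explicit forms of $\gamma$, $\beta$ and $\alpha^{2}-4\beta$.
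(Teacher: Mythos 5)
Your proposal follows the paper's own argument exactly: the paper likewise equates the hypergeometric root $x_{4}(t)$ of (\ref{x4(t)_Hyp}) with the Descartes root $x_{1}$ of (\ref{x1_quartic}), using $\alpha^{2}=\xi(z)=2^{2/3}g(z)$ from the resolvent cubic and dividing by $t=3\cdot 2^{-8/3}z^{1/3}$. Your added attention to branch selection and the $z\to 0$ consistency check is more care than the paper itself supplies, but the route is the same.
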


\begin{remark}
It is worth noting that the numerical evaluation of the LHS of (\ref%
{3F2_resultado}) seems to fail for $z=1$, since this point is a branch
point. However, taking $a=\frac{1}{4}$, $c=\frac{1}{2}$ and $d=\frac{1}{3}$
in Whipple's sum (\ref{Whipple_sum}), we obtain:%
\begin{equation*}
_{3}F_{2}\left( \left.
\begin{array}{c}
\frac{1}{4},\frac{1}{2},\frac{3}{4} \\
\frac{2}{3},\frac{4}{3}%
\end{array}%
\right\vert 1\right) =\frac{\pi \,\Gamma \left( \frac{2}{3}\right) \Gamma
\left( \frac{4}{3}\right) }{\Gamma \left( \frac{11}{24}\right) \Gamma \left(
\frac{17}{24}\right) \Gamma \left( \frac{19}{24}\right) \Gamma \left( \frac{%
25}{24}\right) }=\frac{4}{3},
\end{equation*}%
which is the result that we obtain on the RHS of (\ref{3F2_resultado}).
\end{remark}

\begin{corollary}
For $\left\vert z\right\vert <1$, we have%
\begin{equation}
\,_{3}F_{2}\left( \left.
\begin{array}{c}
\frac{1}{2},\frac{5}{6},\frac{1}{6} \\
\frac{2}{3},\frac{4}{3}%
\end{array}%
\right\vert z\right) =\frac{1}{\sqrt{1-z}}H\left( \frac{-4z}{\left(
1-z\right) ^{2}}\right) ,  \label{3F2_resultado_b}
\end{equation}%
where%
\begin{equation*}
H\left( t\right) =\frac{4}{3}t^{-1/3}\left[ \sqrt{g\left( t\right) +\frac{%
3z^{1/3}\sqrt{g\left( t\right) }}{1-2\left[ g\left( t\right) \right] ^{3/2}}}%
-\sqrt{g\left( t\right) }\right] .
\end{equation*}
\end{corollary}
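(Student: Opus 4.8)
The plan is to obtain (\ref{3F2_resultado_b}) directly from the reduction formula (\ref{3F2_resultado}) by applying a classical quadratic transformation of the generalized hypergeometric function ${}_{3}F_{2}$. The transformation I would invoke is the one that carries the argument $z$ to $-4z/(1-z)^{2}$:
\begin{equation*}
{}_{3}F_{2}\left( \left.
\begin{array}{c} a,b,c \\ 1+a-b,\ 1+a-c \end{array}
\right| z\right)
=\left( 1-z\right) ^{-a}\,{}_{3}F_{2}\left( \left.
\begin{array}{c} \frac{a}{2},\ \frac{a+1}{2},\ 1+a-b-c \\ 1+a-b,\ 1+a-c \end{array}
\right| \frac{-4z}{\left( 1-z\right) ^{2}}\right) ,
\end{equation*}
valid for $z$ in a neighbourhood of the origin; such quadratic transformations are standard and are catalogued, e.g., in \cite{Prudnikov3} and in Bailey's monograph on generalized hypergeometric series. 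If a self-contained treatment is preferred, this identity can be verified either by checking that both sides are annihilated by the same third-order hypergeometric operator and agree to first order at $z=0$, or, more elementarily, by comparing the leading power-series coefficients of the two sides (already the $z^{1}$ coefficients force the relation $bc=(1+a-b)(1+a-c)-(a+1)(1+a-b-c)$, which is an identity).

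Then I would specialise the parameters by setting $a=\frac{1}{2}$, $b=\frac{5}{6}$, $c=\frac{1}{6}$. With these values $1+a-b=\frac{2}{3}$, $1+a-c=\frac{4}{3}$, $\frac{a}{2}=\frac{1}{4}$, $\frac{a+1}{2}=\frac{3}{4}$ and $1+a-b-c=\frac{1}{2}$, so the left-hand side of the transformation is exactly the ${}_{3}F_{2}$ of (\ref{3F2_resultado_b}), while the ${}_{3}F_{2}$ on the right-hand side is precisely the one occurring in (\ref{3F2_resultado}), now evaluated at $-4z/(1-z)^{2}$, and the prefactor is $(1-z)^{-1/2}=1/\sqrt{1-z}$. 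Since (\ref{3F2_resultado}) identifies that hypergeometric function with the function $H$ of the statement, i.e.\ $H(t)={}_{3}F_{2}\!\left( \frac{1}{4},\frac{1}{2},\frac{3}{4};\frac{2}{3},\frac{4}{3};t\right)$, substituting $t=-4z/(1-z)^{2}$ yields (\ref{3F2_resultado_b}).

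The step that requires the most care, and which I expect to be the only genuine obstacle besides quoting or re-deriving the quadratic transformation, is the justification of the range $|z|<1$. The transformation is obtained a priori only near $z=0$, and for general $z$ with $|z|<1$ the transformed argument $-4z/(1-z)^{2}$ need not have modulus less than $1$, so (\ref{3F2_resultado}) must be used in its analytically continued form on $\mathbb{C}$. The relevant observation is that if $-4z/(1-z)^{2}=s$ with $s\in[1,\infty)$, then $z$ solves $sz^{2}+(4-2s)z+s=0$, whose discriminant $16(1-s)$ is $\le 0$ and whose two (conjugate) roots have product $1$, whence $|z|=1$. Therefore, for $|z|<1$ the transformed argument never meets the cut $[1,\infty)$, so $H(-4z/(1-z)^{2})$ and $(1-z)^{-1/2}$ are analytic on the connected disc $\{|z|<1\}$; since the two sides of (\ref{3F2_resultado_b}) coincide near $z=0$, they coincide throughout the disc, which is exactly the asserted range. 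The remaining parameter matching and algebraic simplifications are routine.
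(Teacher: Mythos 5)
Your proof is correct and follows essentially the same route as the paper: the quadratic transformation you quote, specialized at $a=\tfrac12$, $b=\tfrac56$, $c=\tfrac16$, is exactly the transformation of Kato used in the paper (with $a=2\alpha$, $b=2\alpha+\lambda$, $c=2\alpha+\mu$), after which both arguments substitute the reduction formula (\ref{3F2_resultado}) at the transformed argument $-4z/(1-z)^{2}$. Your additional verification that $-4z/(1-z)^{2}$ avoids the cut $[1,\infty)$ whenever $\lvert z\rvert<1$ is a justification of the stated range that the paper leaves implicit.
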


\begin{proof}
Take $\alpha =\frac{1}{4}$, $\lambda =\frac{1}{3},$ and $\mu =-\frac{1}{3}$
in the the quadratic transformation \cite{Kato}:%
\begin{eqnarray*}
&&_{3}F_{2}\left( \left.
\begin{array}{c}
2\alpha ,2\alpha +\lambda ,2\alpha +\mu \\
1-\lambda ,1-\mu%
\end{array}%
\right\vert x\right) \\
&=&\left( 1-x\right) ^{-2\alpha }\,_{3}F_{2}\left( \left.
\begin{array}{c}
\alpha ,\alpha +\frac{1}{2},1-2\alpha -\lambda -\mu \\
1-\lambda ,1-\mu%
\end{array}%
\right\vert \frac{-4x}{\left( 1-x\right) ^{2}}\right) ,
\end{eqnarray*}%
to obtain,
\begin{equation}
_{3}F_{2}\left( \left.
\begin{array}{c}
\frac{1}{2},\frac{5}{6},\frac{1}{6} \\
\frac{2}{3},\frac{4}{3}%
\end{array}%
\right\vert x\right) =\frac{1}{\sqrt{1-x}}\,_{3}F_{2}\left( \left.
\begin{array}{c}
\frac{1}{4},\frac{3}{4},\frac{1}{2} \\
\frac{2}{3},\frac{4}{3}%
\end{array}%
\right\vert \frac{-4x}{\left( 1-x\right) ^{2}}\right) .  \label{Kato}
\end{equation}%
From (\ref{3F2_resultado})\ and (\ref{Kato}), we arrive at (\ref%
{3F2_resultado_b}), as we wanted to prove.
\end{proof}

\begin{remark}
We can calculate the LHS\ of (\ref{3F2_resultado_b}) for the branch point $%
z=1$ taking $a=\frac{1}{6}$, $c=\frac{1}{2}$, and $d=\frac{2}{3}$, resulting
in%
\begin{equation*}
\,_{3}F_{2}\left( \left.
\begin{array}{c}
\frac{1}{2},\frac{5}{6},\frac{1}{6} \\
\frac{2}{3},\frac{4}{3}%
\end{array}%
\right\vert 1\right) =\frac{\pi \,\Gamma \left( \frac{2}{3}\right) \Gamma
\left( \frac{4}{3}\right) }{\Gamma \left( \frac{5}{12}\right) \Gamma
^{2}\left( \frac{3}{4}\right) \Gamma \left( \frac{13}{12}\right) }\approx
1.24081.
\end{equation*}
\end{remark}

\begin{corollary}
From the result (\ref{3F2_resultado}), we obtain the following identity
involving the product of two Legendre functions:%
\begin{eqnarray}
&&P_{-1/6}^{1/3}\left( \sqrt{\frac{2}{1+\sqrt{1-z}}}\right) \
P_{-1/6}^{-1/3}\left( \sqrt{\frac{2}{1+\sqrt{1-z}}}\right)  \label{Product_P}
\\
&=&\frac{\sqrt{6\left( 1+\sqrt{1-z}\right) }}{\pi \ z^{1/3}}\left[ \sqrt{%
g\left( z\right) +\frac{3z^{1/3}\sqrt{g\left( z\right) }}{1-2\left[ g\left(
z\right) \right] ^{3/2}}}-\sqrt{g\left( z\right) }\right] .  \notag
\end{eqnarray}
\end{corollary}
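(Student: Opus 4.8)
The plan is to derive \eqref{Product_P} from the previously established reduction formula \eqref{3F2_resultado} by rewriting the ${}_{3}F_{2}$ on the left-hand side of \eqref{3F2_resultado} as a product of two Legendre functions. The natural tool is a Clausen-type or Bailey-type identity expressing a well-poised ${}_{3}F_{2}$ as the product of two ${}_{2}F_{1}$'s; more precisely, I would look for the classical formula of the form
\begin{equation*}
{}_{3}F_{2}\!\left(\left.\begin{array}{c} 2a,2b,a+b\\ a+b+\tfrac12,2a+2b\end{array}\right| z\right)
={}_{2}F_{1}\!\left(\left.\begin{array}{c} a,b\\ a+b+\tfrac12\end{array}\right| z\right)^{2},
\end{equation*}
or, failing an exact match of parameters, a closely related product formula (Orr-type) that writes a ${}_{3}F_{2}$ as a product of two genuinely different ${}_{2}F_{1}$'s. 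One then matches parameters: with the numerator data $\tfrac14,\tfrac12,\tfrac34$ and denominator $\tfrac23,\tfrac43$ in \eqref{3F2_resultado}, the target is to identify $a,b$ (here one expects $a=\tfrac{1}{12}$-type shifts producing the Legendre orders $\pm\tfrac13$ and degree $-\tfrac16$ seen in \eqref{Product_P}).

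Concretely, the steps I would carry out, in order, are: (i) locate in the literature the appropriate product formula converting the ${}_{3}F_{2}$ of \eqref{3F2_resultado} into $F\cdot\tilde F$ for two ${}_{2}F_{1}$'s whose parameters, after the substitution $z\mapsto \tfrac{1-\sqrt{1-z}}{2}$ or similar, land on the Legendre normalization; (ii) invoke the hypergeometric representation of the Legendre function already recorded in the paper, namely $P_{\nu}^{-\mu}(x)=2^{-\mu}(x^{2}-1)^{\mu/2}{}_{2}F_{1}\!\left(\mu-\nu,\mu+\nu+1;\mu+1;\tfrac{1-x}{2}\right)$, specialized to $\nu=-\tfrac16$, $\mu=\pm\tfrac13$, and to the argument $x=\sqrt{2/(1+\sqrt{1-z})}$, which makes $\tfrac{1-x}{2}$ reduce to the hypergeometric variable arising in step (i); (iii) compute the elementary prefactors $2^{-\mu}(x^{2}-1)^{\mu/2}$ for the two values $\mu=\pm\tfrac13$, whose product telescopes (the powers of $2$ and of $(x^{2}-1)$ cancel between $\mu=+\tfrac13$ and $\mu=-\tfrac13$), leaving only powers of $1+\sqrt{1-z}$; (iv) substitute $x^{2}-1=\tfrac{2}{1+\sqrt{1-z}}-1=\tfrac{1-\sqrt{1-z}}{1+\sqrt{1-z}}$ and simplify, producing the factor $\sqrt{6(1+\sqrt{1-z})}/(\pi z^{1/3})$; and (v) equate with the right-hand side of \eqref{3F2_resultado}, which after the same change of variable gives exactly the bracketed expression in \eqref{Product_P}. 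The $\pi$ and the constants enter through $\Gamma$-function values, e.g. $\Gamma(\tfrac23)\Gamma(\tfrac43)$ and the reflection formula $\Gamma(\tfrac13)\Gamma(\tfrac23)=2\pi/\sqrt3$, which I would assemble at the end.

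The main obstacle I anticipate is step (i): pinning down the precise product identity with parameters that match $\left(\tfrac14,\tfrac12,\tfrac34;\tfrac23,\tfrac43\right)$. The ${}_{3}F_{2}$ here is not obviously Clausen's series, so one likely needs an Orr-type theorem or a contiguous/quadratic transformation first to bring it into a form $\sum \frac{(a)_k(b)_k}{(a+b+\frac12)_k k!}(\cdots)$ before the squaring (or product) formula applies; getting the arguments to agree — very possibly via the quadratic map $z\mapsto \tfrac{1-\sqrt{1-z}}{2}$ already used elsewhere in the paper — is where the real bookkeeping lies. Once the correct product formula and substitution are in hand, steps (ii)–(v) are routine: it is just the cancellation of the elementary prefactors of the two Legendre functions and collecting the Gamma-constants into the displayed $\pi$ and $\sqrt{6}$.
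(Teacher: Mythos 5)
Your overall strategy is exactly the paper's: split the $_{3}F_{2}$ of (\ref{3F2_resultado}) into a product of two $_{2}F_{1}$'s with a known product formula, then identify each factor as a Legendre function. However, as written the proposal has two concrete problems. First, the central step (your step (i)) is left unresolved, and the candidate you name first — Clausen's squaring formula — is not the right tool here, since the two Legendre factors in (\ref{Product_P}) have different orders and the two $_{2}F_{1}$'s cannot be equal. The identity actually needed is the Bailey/Orr-type product formula
\begin{equation*}
_{3}F_{2}\left( \left.
\begin{array}{c}
a,1-a,\tfrac{1}{2} \\
b,2-b
\end{array}
\right\vert z\right)
=\,_{2}F_{1}\left( \left.
\begin{array}{c}
a,1-a \\
2-b
\end{array}
\right\vert w\right)
\,_{2}F_{1}\left( \left.
\begin{array}{c}
a,1-a \\
b
\end{array}
\right\vert w\right),\qquad w=\frac{1-\sqrt{1-z}}{2},
\end{equation*}
which with $a=\tfrac14$, $b=\tfrac23$ matches the parameters $\left(\tfrac14,\tfrac12,\tfrac34;\tfrac23,\tfrac43\right)$ exactly (note $1-a=\tfrac34$); no preliminary contiguous or quadratic transformation is required.

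Second, your step (ii) would fail as stated. You propose the representation $P_{\nu }^{-\mu }(x)=2^{-\mu }(x^{2}-1)^{\mu /2}\,_{2}F_{1}\left(\mu -\nu ,\mu +\nu +1;\mu +1;\tfrac{1-x}{2}\right)$ with $x=\sqrt{2/(1+\sqrt{1-z})}$ and claim that $\tfrac{1-x}{2}$ reduces to the hypergeometric variable of step (i). It does not: with that $x$ one has $\tfrac{1-x}{2}\neq \tfrac{1-\sqrt{1-z}}{2}$ (e.g.\ at $z=\tfrac34$ the two values are $\approx -0.077$ and $\tfrac14$), and moreover the numerator parameters $(\mu-\nu,\mu+\nu+1)=(\tfrac12,\tfrac76)$ do not match $(\tfrac14,\tfrac34)$. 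What is needed instead is the representation whose Legendre argument is $1/\sqrt{1-w}$, namely (\ref{2F1_Legendre}): $_{2}F_{1}\left(a,a+\tfrac12;c;w\right)=2^{c-1}\Gamma (c)(-w)^{(1-c)/2}(1-w)^{(c-1)/2-a}\,P_{2a-c}^{1-c}\left(1/\sqrt{1-w}\right)$, applied with $a=\tfrac14$ and $c=\tfrac23,\tfrac43$; since $1/\sqrt{1-w}=\sqrt{2/(1+\sqrt{1-z})}$, this produces precisely the argument in (\ref{Product_P}), together with $P_{-1/6}^{1/3}$ and $P_{-5/6}^{-1/3}$, the latter converted to $P_{-1/6}^{-1/3}$ via $P_{-\nu -1}^{\mu }=P_{\nu }^{\mu }$. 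The elementary prefactors then combine with $\Gamma\left(\tfrac23\right)\Gamma\left(\tfrac43\right)=\tfrac{2\pi}{3\sqrt{3}}$ to give the stated constant. So the route is right in outline, but both the product formula and the Legendre representation must be replaced by the correct ones for the computation to close.
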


\begin{proof}
We found in the literature \cite[Eqn. 7.4.1(10)]{Prudnikov3}:%
\begin{eqnarray*}
&&_{3}F_{2}\left( \left.
\begin{array}{c}
a,1-a,\frac{1}{2} \\
b,2-b%
\end{array}%
\right\vert z\right) \\
&=&\,_{2}F_{1}\left( \left.
\begin{array}{c}
a,1-a \\
2-b%
\end{array}%
\right\vert \frac{1-\sqrt{1-z}}{2}\right) \,_{2}F_{1}\left( \left.
\begin{array}{c}
a,1-a \\
b%
\end{array}%
\right\vert \frac{1-\sqrt{1-z}}{2}\right) ,
\end{eqnarray*}%
thus, taking $a=\frac{1}{4}$ and $b=\frac{3}{3}$, we have%
\begin{eqnarray}
&&_{3}F_{2}\left( \left.
\begin{array}{c}
\frac{1}{4},\frac{1}{2},\frac{3}{4} \\
\frac{2}{3},\frac{4}{3}%
\end{array}%
\right\vert z\right)  \label{3F2_a} \\
&=&\,_{2}F_{1}\left( \left.
\begin{array}{c}
\frac{1}{4},\frac{3}{4} \\
\frac{4}{3}%
\end{array}%
\right\vert \frac{1-\sqrt{1-z}}{2}\right) \,_{2}F_{1}\left( \left.
\begin{array}{c}
\frac{1}{4},\frac{3}{4} \\
\frac{2}{3}%
\end{array}%
\right\vert \frac{1-\sqrt{1-z}}{2}\right) .  \notag
\end{eqnarray}%
Also, setting $a=\frac{1}{4}$ and $c=\frac{2}{3},\frac{4}{3}$ in (\ref%
{2F1_Legendre}), we have%
\begin{eqnarray}
_{2}F_{1}\left( \left.
\begin{array}{c}
\frac{1}{4},\frac{3}{4} \\
\frac{2}{3}%
\end{array}%
\right\vert z\right) &=&2^{-1/3}\Gamma \left( \frac{2}{3}\right)
z^{1/6}\left( 1-z\right) ^{-5/12}P_{-1/6}^{1/3}\left( \frac{1}{\sqrt{1-z}}%
\right) ,  \label{2F1_2/3} \\
_{2}F_{1}\left( \left.
\begin{array}{c}
\frac{1}{4},\frac{3}{4} \\
\frac{4}{3}%
\end{array}%
\right\vert z\right) &=&2^{1/3}\Gamma \left( \frac{4}{3}\right)
z^{-1/6}\left( 1-z\right) ^{-1/12}P_{-5/6}^{-1/3}\left( \frac{1}{\sqrt{1-z}}%
\right) .  \label{2F1_4/3}
\end{eqnarray}%
Therefore, inserting (\ref{2F1_2/3}) and (\ref{2F1_4/3})\ in (\ref{3F2_a}),
taking into account the property (\ref{Pnumu}),\ and knowing, according to
\cite[Eqn. 43:4:5]{Atlas}, that $\Gamma \left( \frac{2}{3}\right) \Gamma
\left( \frac{4}{3}\right) =\frac{2\pi }{3\sqrt{3}}$, we obtain (\ref%
{Product_P}), as we wanted to prove.
\end{proof}

\section{Conclusions}

We have considered the solution of $x^{n}-x+t=0$ for $n=2,3,4$, both in
terms of hypergeometric functions as well as in terms of elementary
functions. Thereby, we have obtained some reduction formulas of
hypergeometric functions. In order to extend the latter results, we have
applied the differentiation formulas (\ref{Dn_(1)}), (\ref{Dn_(2)}), (\ref%
{Dn_(3)})\ and (\ref{Dn_(4)}), as well as the integration formula stated in (%
\ref{Int_Andrews}). Consequently, we have derived new identities and
infinite integrals involving special functions, i.e. the incomplete beta
function, the lower incomplete gamma function, the parabolic cylinder
function and the Legendre function. All the results presented in this paper
have been tested with MATHEMATICA\ and are available at
\url{https://bit.ly/2PyPz6Y}.

\appendix{}

\section{The solution of the cubic equation}

According to \cite{McKelvey}, in the solution of the depressed cubic
equation:%
\begin{equation}
x^{3}\pm 3mx+2n=0,\quad m>0,  \label{Cubic_depressed}
\end{equation}%
we may distinguish the following cases:\

\begin{description}
\item[Case I] Sign `$+$' in (\ref{Cubic_depressed}). One real root and two
complex roots:\
\begin{eqnarray*}
x_{1} &=&-2\sqrt{m}\sinh \left( \frac{\sinh ^{-1}\left( n\,m^{-3/2}\right) }{%
3}\right) , \\
x_{2,3} &=&\sqrt{m}\left[ \sinh \left( \frac{\sinh ^{-1}\left(
n\,m^{-3/2}\right) }{3}\right) \pm i\sqrt{3}\cosh \left( \frac{\sinh
^{-1}\left( n\,m^{-3/2}\right) }{3}\right) \right] .
\end{eqnarray*}

\item[Case II] Sign `$-$' in (\ref{Cubic_depressed})\ and $n^{2}-m^{3}>0$.
One real root and two complex roots.
\begin{eqnarray}
x_{1} &=&-2\sqrt{m}\cosh \left( \frac{\cosh ^{-1}\left( n\,m^{-3/2}\right) }{%
3}\right) ,  \label{x_cosh} \\
x_{2,3} &=&\sqrt{m}\left[ \cosh \left( \frac{\cosh ^{-1}\left(
n\,m^{-3/2}\right) }{3}\right) \pm i\sqrt{3}\sinh \left( \frac{\cosh
^{-1}\left( n\,m^{-3/2}\right) }{3}\right) \right] .  \notag
\end{eqnarray}

\item[Case III] Sign `$-$' in (\ref{Cubic_depressed})\ and $n^{2}-m^{3}<0$.
Three real roots.
\begin{eqnarray}
x_{1} &=&-2\sqrt{m}\cos \left( \frac{\cos ^{-1}\left( n\,m^{-3/2}\right) }{3}%
\right) ,  \label{x_cos} \\
x_{2,3} &=&\sqrt{m}\left[ \cos \left( \frac{\cos ^{-1}\left(
n\,m^{-3/2}\right) }{3}\right) \pm \sqrt{3}\sin \left( \frac{\cos
^{-1}\left( n\,m^{-3/2}\right) }{3}\right) \right] .  \notag
\end{eqnarray}
\end{description}

\section{The solution of the quartic equation}

According to Descartes solution of the quartic equation \cite{JLQuartic},
the four solutions of the depressed quartic equation:%
\begin{equation}
x^{4}+p\,x^{2}+q\,x+r=0,  \label{Quartic_depressed}
\end{equation}%
are given by:%
\begin{eqnarray}
x_{1,2} &=&\frac{1}{2}\left( -\alpha \pm \sqrt{\alpha ^{2}-4\beta }\right) ,
\label{x_1,2_quartic} \\
x_{3,4} &=&\frac{1}{2}\left( \alpha \pm \sqrt{\alpha ^{2}-4\gamma }\right) ,
\label{x_3,4_quartic}
\end{eqnarray}%
where $\alpha $ is a solution of the \textit{resolvent bicubic equation}:%
\begin{equation}
\alpha ^{6}+2p\alpha ^{4}+\left( p-4r\right) \alpha ^{2}-q^{2}=0,
\label{Resolvent_cubic_def}
\end{equation}%
and%
\begin{eqnarray}
\gamma &=&\frac{1}{2}\left( p+\alpha ^{2}+\frac{q}{\alpha }\right) ,
\label{gamma_def} \\
\beta &=&\frac{r}{\gamma }.  \label{beta_def}
\end{eqnarray}

Note that the resolvent equation can be solved in $\alpha ^{2}$ with the
solution des\-cribed in Appendix $A$.

\bigskip


\end{document}